\documentclass[aap,preprint]{imsart}
\usepackage{amsfonts}
\RequirePackage[OT1]{fontenc}
\RequirePackage{amsthm,amsmath}
\RequirePackage[numbers]{natbib}
\RequirePackage[colorlinks,citecolor=blue,urlcolor=blue]{hyperref}
\usepackage{color}

\arxiv{arXiv:1511.00546}

\startlocaldefs
\numberwithin{equation}{section}
\theoremstyle{plain}

\endlocaldefs

\newcommand{\bigO}{\mathcal{O}}
\newcommand{\lr}[1]{\left(#1\right)}

\newcommand{\PHI}{\Phi^{(1)}}
\newcommand{\PHItwo}{\Phi^{(2)}}

\newcommand{\NUstar}{\nu^*}
\newcommand{\spm}{\{ 1, \ldots, q \}}
\newcommand{\Pois}[1]{\text{Poi}\lr{#1}}
\newcommand*{\BA}{\be \ba}
\newcommand*{\EA}{\ea \ee}

\newcommand*{\MUstar}{\mu^{*}}

\newcommand{\PHImin}{\phi_{\text{min}}}
\newcommand{\PHImax}{\phi_{\text{max}}}
\newcommand{\IND}[1]{1_{#1}}
\newcommand{\E}[1]{\mathbb{E}\left[#1\right]}

\newtheorem{theorem}{Theorem}[section]
\newcommand{\s}[2]{\sum_{#1}^{#2}}

\newtheorem{definition}[theorem]{Definition}
\renewcommand{\P}[1]{\mathbb{P}\left(#1\right)}
\newcommand{\set}[1]{\left\{#1\right\}}
\newcommand{\indicator}[1]{1_{\set{#1}}}

\newcommand*{\be}{\begin{equation}}
\newcommand*{\ee}{\end{equation}}
\newtheorem{conjecture}[theorem]{Conjecture}
\newcommand{\calT}{\mathcal{T}}
\newtheorem{proposition}[theorem]{Proposition}
\newtheorem{remark}[theorem]{Remark}

\newcommand{\calA}{\mathcal{A}}
\newcommand{\calE}{\mathcal{E}}
\newcommand{\calU}{\mathcal{U}}

\newtheorem{lemma}[theorem]{Lemma}
\newcommand*{\ba}{\begin{aligned}}
\newcommand*{\ea}{\end{aligned}}

\begin{document}

\begin{frontmatter}
\title{An Impossibility Result for Reconstruction in the Degree-Corrected Stochastic Block Model}
\runtitle{Impossibility Result for DC-SBM}
 
\begin{aug}
\author{\fnms{Lennart} \snm{Gulikers}\thanksref{m1,m3}\ead[label=e1]{lennart.gulikers@inria.fr}},
\author{\fnms{Marc} \snm{Lelarge}\thanksref{m2,m3}\ead[label=e2]{marc.lelarge@ens.fr}}
\and
\author{\fnms{Laurent} \snm{Massouli\'e}\thanksref{m1}
\ead[label=e3]{laurent.massoulie@inria.fr}
 }

\runauthor{L. Gulikers et al.}

\affiliation{Microsoft Research - INRIA Joint Centre\thanksmark{m1} and INRIA Paris\thanksmark{m2} and \'Ecole Normale Sup\'erieure\thanksmark{m3}}

\address{Postal address of L. Gulikers and L. Massouli\'e: \\
Microsoft Research-Inria Joint Centre,\\
 Campus de l'\'Ecole Polytechnique, 
 b\^atiment Alan Turing, \\
 1 rue Honor\'e d'Estienne d'Orves,
  91120 Palaiseau, France. \\
  E-mail addresses: \\
  lennart.gulikers@inria.fr and laurent.massoulie@inria.fr}

\address{Postal address of M. Lelarge:\\
Inria Paris,\\
 2 rue Simone Iff, CS 42112, 75589 Paris Cedex 12. \\
 E-mail address: \\ marc.lelarge@ens.fr}
\end{aug}

\begin{abstract}
We consider the Degree-Corrected Stochastic Block Model (DC-SBM): a random graph on $n$ nodes, having i.i.d. weights $(\phi_u)_{u=1}^n$ (possibly heavy-tailed), partitioned into $q \geq 2$ asymptotically equal-sized clusters. The model parameters are two constants $a,b > 0$ and the finite second moment of the weights $\PHItwo$. Vertices $u$ and $v$ are connected by an edge with probability $\frac{\phi_u \phi_v}{n}a$ when they are in the same class and with probability $\frac{\phi_u \phi_v}{n}b$ otherwise. 

We prove that it is information-theoretically impossible to estimate the clusters in a way positively correlated with the true community structure when  $(a-b)^2 \PHItwo \leq q(a+b)$. 

As by-products of our proof  we obtain $(1)$ a precise coupling result for local neighbourhoods in DC-SBM's, that we use in \cite{GuLeMa16} to establish a law of large numbers for local-functionals and $(2)$ that long-range interactions are weak in (power-law) DC-SBM's.
\end{abstract}

\begin{keyword}[class=MSC]
\kwd[Primary ]{91D30,05C80}
\kwd[; secondary ]{68W40,91C20}
\end{keyword}

\begin{keyword}
\kwd{Social and Information Networks; Random Graphs; Degree-Corrected
Stochastic Block Model; Spectral Algorithm; Machine Learning}
\end{keyword}

\end{frontmatter}

\section{Introduction}
It is well known that many networks exhibit a community structure. Think about groups of friends, web pages discussing related topics, or people speaking the same language (for instance, the Belgium population could be roughly divided into people speaking either Flemish or French). Finding those communities helps us understand and exploit general networks.

Instead of looking directly at real networks, we experiment first with models for networks with communities. One of the most elementary models is the Stochastic Block Model\footnote{SBM is standard terminology in the machine learning and statistics community, and is known as the Planted-Partition Model in theoretical computer science. The SBM is a special case of inhomogeneous random graphs, see \cite{BoBeJa07}.} (SBM) \cite{HoLaLe83}: a random graph on $n$ vertices partitioned into two equal-sized clusters such that vertices within the same cluster are connected with probability $p_{\text{in}}$ and  between the two communities with probability $p_{\text{out}}$. The question is now: given an instance of the SBM, can we retrieve the community membership of its vertices?

Most real networks are sparse and a thorough analysis of the sparse regime in the SBM - i.e., $p_{\text{in}} = \frac{a}{n}$ and $p_{\text{out}} = \frac{b}{n}$ for some constants $a,b>0$ - will therefore lead to a better understanding of networks. 

When the difference between $a$ and $b$ is small, the graph might not even contain enough information to distinguish between the two clusters. In \cite{DeAuKrFlMoCrZd11} it was first conjectured that a detectability phase-transition exists in the SBM: detection would be possible if and only if $\lr{a-b}^2 > 2(a+b)$. The negative side of this conjecture has been confirmed in \cite{MoNeSly15}. 
The positive side has been recently confirmed in \cite{Ma14} and \cite{MoNeSl13} using sophisticated (but still running in polynomial time) algorithms designed for this particular problem.

In this paper we study an extension of the SBM: a Degree-Corrected Stochastic Block Model (DC-SBM), see \cite{KaBrNe11}. 
Our motivation is as follows: although the SBM is a useful model due to its analytical tractability, it fails to accurately describe networks with a wide variety in their degree-sequences (because nodes in the same cluster are stochastically  indistinguishable). Indeed, real degree distributions often follow  a power-law \cite{AiChLu01}. Compare this to fitting a straight line on intrinsically curved data, which is doomed to miss important information.

The DC-SBM on $q$ communities is defined as follows: it is a random graph on $n$ vertices partitioned into $q$ asymptotically equal-sized clusters by giving each vertex $v$ a spin $\sigma_v$ drawn uniformly from $\spm$. The vertices have i.i.d. weights $\{\phi_u\}_{u=1}^n$ governed by some law $\nu$ with support in $W \subset [ \PHImin , \infty),$ where $0 < \PHImin < \infty$ is a constant \emph{independent of $n$}. We assume that the weights are possibly heavy-tailed with exponent $\beta > 8$: for all large enough $k$, \[\P{ \phi_1 \geq k } = \nu([k, \infty)) \leq \frac{1}{k^{\beta}}.\]   An edge is drawn between nodes $u$ and $v$ with probability $\frac{\phi_u \phi_v}{n} a$ when $u$ and $v$ have the same spin and with probability $\frac{\phi_u \phi_v}{n} b$ otherwise. The model parameters $a$ and $b$ are constant.

We denote the $k$-th moment of the weights by $\Phi^{(k)}$, i.e., $\Phi^{(k)} = \int_W x^k \mathrm{d} \nu(x)$.
We further introduce the following shorthand notation: $\sigma = (\sigma_1, \ldots, \sigma_n)$ and $\phi = (\phi_1, \ldots, \phi_n)$. For a subset $U \subset \{1,\ldots, n\}$ of the vertices, we define $\sigma_U = \{\sigma_u \}_{u \in U}$ and $\phi_U = \{\phi_u \}_{u \in U}$.

In the present paper we extend results in \cite{MoNeSly15} to the degree-corrected setting. More specifically, we prove that when $(a-b)^2 \PHItwo \leq q(a+b)$, it is information-theoretically impossible to estimate the spins in a way positively correlated with the true community structure based only on a single observation of the graph without knowing the weights. 

In a follow-up paper \cite{GuLeMa16}, we show that \textbf{in the two-community setting} above the threshold (i.e., $(a-b)^2 \PHItwo > 2(a+b)$), reconstruction is possible based on the second eigenvector of the so-called non-backtracking matrix. This is an extension of the results in \cite{BoLeMa15} for the \emph{ordinary} Stochastic Block Model.

We note that in \textbf{the two-community setting} there is an interpretation of the threshold  in terms of eigenvalues of the adjacency matrix $A$ given the weights. Indeed, if   $\psi_1$ and $\psi_2$ are the vectors defined for $u \in V$ by $\psi_1(u) = \frac{1}{\sqrt{2}} \phi_u$ and $\psi_2(u) = \frac{1}{\sqrt{2}} \sigma_u \phi_u$, then
$$
\E{A|\phi_1, \ldots, \phi_n} = \frac{a+b}{n}\psi_1 \psi_1^* + \frac{a-b}{n}\psi_2 \psi_2^* - a \frac{1}{n} \text{diag}\{\phi_u^2 \}.
$$
Thus, for $i=1,2$,  $\widehat{\psi}_i = \frac{\psi_i}{\| \psi_i \|_2}$ are the "mean-eigenvectors"  together with corresponding "mean-eigenvalues" $\lambda_1 = \frac{a+b}{2}\PHItwo$ and $\lambda_2 = \frac{a-b}{2}\PHItwo$: 
$$
\left\| \E{A|\phi_i, \ldots, \phi_n} \widehat{\psi}_i - \lambda_i \widehat{\psi}_i \right\|_2 \to 0,
$$
in probability, as $n$ tends to $\infty$.

We thus observe that the condition $(a-b)^2 \PHItwo \leq 2(a+b)$ is equivalent to $\lambda_2^2 \leq \lambda_1$.

\subsection{Our results}
\label{ssec::results}
In the sparse regime, $\Theta(n)$ vertices are isolated for which random guess is the only possible reconstruction-algorithm. In this paper, we therefore consider the community detection problem where we ask for a partition positively correlated with the true community structure:
\begin{definition}
Let $G$ be an observation of the DC-SBM, with true communities $\{\sigma_u\}_{u=1}^n$. Further, let $\{\widehat{\sigma}_u\}_{u=1}^n$ be a reconstruction of the communities, based on the observation $G$. 
Then, we say that $\{\widehat{\sigma}_u\}_{u=1}^n$ is \emph{positively correlated} with the true partition  $\{\sigma_u\}_{u=1}^n$ if there exists $\delta > 0$ such that 
\[ \P{\frac{1}{n} \s{u=1}{n} \indicator{ \sigma_u = \widehat{\sigma}_u} \geq \frac{1}{q} + \delta} \to 1, \]
as $n \to \infty$.
\label{def::pos}
\end{definition}

Our main result is:
\begin{theorem}
Assume that   $(a-b)^2 \PHItwo \leq q(a+b)$. Let  $G$ be an instance of the DC-SBM. Let $u$ and $v$ be uniformly chosen vertices in $G$.  Then, for any $s \in \spm$,
\be \P{\sigma_u = s | \sigma_v , G} \overset{\mathbb{P}} \to \frac{1}{q},\label{eq::con} \ee
as $n \to \infty$.
\label{thm::ConvergenceToHalf}
\end{theorem}
 Thus, it is already impossible to estimate the spin of a random vertex  given the spin of another vertex, which is an easier problem than reconstructing the group membership of strictly more than a fraction $1/q$ of the vertices (as explained in Lemma \ref{lm::NoConvergence}):
 
\begin{theorem}
Let $G$ be an observation of the DC-SBM with  $(a-b)^2 \Phi^{(2)} \leq  q(a+b)$. Then, no reconstruction $\{\widehat{\sigma}_u\}_{u=1}^n$ based on $G$ is positively correlated with $\{\sigma_u\}_{u=1}^n$.
\label{thm::Main}
\end{theorem}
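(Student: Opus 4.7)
I would mirror the strategy of \cite{MoNeSly15} for the ordinary SBM, with the weight heterogeneity pushing the argument into a function-space setting. The proof splits into three steps. \emph{Step~1 (Local coupling).} I would first prove the coupling announced in the abstract: for a uniformly chosen vertex $V$, the $r$-neighbourhood of $V$ in $G$ (equipped with weights and spins) is, up to total-variation distance $o(1)$ as $n\to\infty$, the first $r$ generations of a multi-type Galton--Watson tree $\CT$. In $\CT$, the root has weight $\phi_\emptyset\sim\nu$ and spin uniform on $\spm$; conditional on a vertex having weight $\phi$ and spin $\sigma$, it has $\mathrm{Poi}\bigl(\phi(a+b)\PHI/2\bigr)$ children with weights i.i.d.\ from the size-biased law $\wit\nu(d\psi)=\psi\,\nu(d\psi)/\PHI$, each independently given spin $\sigma$ with probability $a/(a+b)$. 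The coupling is built by a BFS exploration in which the remaining edge distribution is Poissonised at each step; boundedness of the support $[\PHImin,\PHImax]$ of $\nu$ and the fact that only $O(1)$ vertices are touched up to depth $r$ control the error.

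\emph{Step~2 (Graph-to-tree reduction).} Assume for contradiction that $\wih\sigma$ is positively correlated with $\sigma$. Using the $\pm$-symmetry of the model, I would translate this into a pair-statistic statement: for two uniformly chosen vertices $U,W$, the squared posterior correlation $\E{\bigl(\E{\sigma_U\sigma_W\mid G}\bigr)^2}$ stays bounded away from zero as $n\to\infty$. Applying Step~1 to the $r$-balls of $U$ and $W$ (which are disjoint with high probability) identifies the local conditional distribution with that of two independent copies of $\CT$. Hence, by the Markov property of spin-broadcasting, positive graph reconstruction forces $\E{\bigl(\E{\sigma_\emptyset\mid\sigma_{\partial\CT_r},\CT_r}\bigr)^2}$ to remain bounded away from $0$ as $r\to\infty$.

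\emph{Step~3 (Tree non-reconstruction).} The core technical step is to show that under $(a-b)^2\PHItwo\le 2(a+b)$,
\[
\lim_{R\to\infty}\E{\bigl(\E{\sigma_\emptyset\mid\sigma_{\partial\CT_R},\CT_R}\bigr)^2}=0,
\]
contradicting Step~2. The linear test-statistic $X_R:=\sum_{v\in\partial\CT_R}\sigma_v\phi_v$ is motivating: a branching-process recursion yields $\E{X_R\mid\sigma_\emptyset,\phi_\emptyset}=\sigma_\emptyset\phi_\emptyset\bigl((a-b)\PHItwo/2\bigr)^R$ while $\mathrm{Var}(X_R\mid\sigma_\emptyset,\phi_\emptyset)$ is of order $\bigl((a+b)\PHItwo/2\bigr)^R$, giving a signal-to-noise ratio of order $\bigl[(a-b)^2\PHItwo/(2(a+b))\bigr]^R\le 1$. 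To upgrade this to non-reconstruction by arbitrary (possibly non-linear) estimators, I would adapt the $L^2$-recursion of \cite{MoNeSly15} to the weighted magnetisation $\phi_v m^{(R)}_v$, where $m^{(R)}_v:=\E{\sigma_v\mid\sigma_{\partial\CT_R},\CT_R}$; conditional independence of the subtrees rooted at the children of $v$ yields a one-step recursion governed by the integral operator with kernel $K(\phi,\psi)=\tfrac{a-b}{2}\phi\psi\,\nu(d\psi)$, whose leading eigenvalue is $(a-b)\PHItwo/2$ with eigenfunction $\phi\mapsto\phi$; the inequality $((a-b)\PHItwo/2)^2\le (a+b)\PHItwo/2$ is precisely what makes the recursion contractive. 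The hardest part is this last step: in the ordinary SBM the $L^2$-recursion is scalar, whereas here it lives on $L^2(\wit\nu)$, and identifying the correct weighted magnetisation $\phi_v m^{(R)}_v$ (so that the linearised recursion is governed by the eigenfunction $\phi\mapsto\phi$) is where the weight heterogeneity has genuine bite. Steps~1 and~2 are technically routine by comparison, thanks to the bounded support of $\nu$ and the $\pm$-symmetry of the model.
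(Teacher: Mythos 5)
Your Step~1 matches the paper's Section~3 coupling, and the overall contradiction scheme of Step~2 (a positively correlated $\widehat{\sigma}$ would force a non-trivial pairwise posterior for two uniform vertices) is the same as the paper's Lemma~\ref{lm::NoConvergence}. There is, however, a genuine gap in Step~2 at the point where you pass from the graph posterior to the tree posterior ``by the Markov property of spin-broadcasting''. Conditionally on $G$, the spins do \emph{not} form a Markov random field on $G$: every \emph{non-edge} $\{u,v\}$ is also informative, since it occurs with probability $1-a\phi_u\phi_v/n$ or $1-b\phi_u\phi_v/n$ according to whether $\sigma_u=\sigma_v$. Hence the boundary spins $\sigma_{\partial G_R}$ do not screen off the complement of the $R$-ball, and one must prove that the aggregate influence of the $\Theta(n^2)$ non-edges (and of conditioning on $\sigma_W$) on the posterior of $\sigma_U$ is $o(1)$. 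This is exactly the content of the paper's Lemma~\ref{lm::MarkovField} (the analogue of Lemma~4.7 of \cite{MoNeSly15}), which requires a careful factorization of $\P{G=g,\sigma=\tau\mid\phi=\psi}$ into $Q_{A\cup B,A\cup B}\,Q_{B\cup C,C}\,Q_{A,C}$ and a restriction to typical spin/weight configurations; it is a substantial part of the proof, so your assessment that Steps~1 and~2 are ``technically routine by comparison'' misplaces where the work lies.

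For Step~3 you take a genuinely different, and considerably harder, route than the paper. You propose to redo the $L^2$ magnetisation recursion of \cite{MoNeSly15} in the function space $L^2(\wit{\nu})$, with the weighted magnetisation $\phi_v m_v^{(R)}$ tracking the top eigenfunction of the linearised operator. The paper instead observes that, \emph{conditionally on the realized tree}, the spin process on $T^{\text{Poi}}$ is exactly the standard two-state symmetric broadcast with flip probability $\epsilon=b/(a+b)$ on every edge --- the weights affect only the offspring counts, never the channel. The Evans--Kenyon--Peres--Schulman criterion (Theorem~\ref{thm::Evans}) therefore applies verbatim realization by realization: non-reconstruction holds whenever $\mathrm{Br}(T)(1-2\epsilon)^2<1$, and since $\mathrm{Br}(T)\le\frac{a+b}{2}\PHItwo$ almost surely this is precisely $(a-b)^2\PHItwo<2(a+b)$; dominated convergence then removes the conditioning on the tree. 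This sidesteps the function-space recursion entirely, which is the main simplification you miss. Your recursion should also work below the threshold, but be aware that your ``contractive'' claim is delicate at equality $((a-b)\PHItwo/2)^2=(a+b)\PHItwo/2$, where the linearised operator has spectral radius exactly one and a bare second-moment argument does not close (the quoted EKPS theorem likewise covers only strict inequality).
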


As a by-product of our proof we obtain a precise coupling result for local neighbourhoods in DC-SBM's to weighted branching processes, such that the weights coincide exactly. This is an ingredient needed to prove a law of large numbers for local functionals that map neighbourhoods in the graph, together with their spins and weights to the real numbers. See Propositions $7.1$ and $7.2$ in \cite{GuLeMa16} for more details. Further, we also establish that long-range interactions are weak in DC-SBM's where the degrees follow a power-law with sufficiently large exponent. 

\subsection{General proof idea}
We first note that reconstruction is impossible when $\frac{a+(q-1)b}{q}\PHItwo \leq 1$, because in this regime there is no giant component\footnote{Indeed, the main result in \cite{BoBeJa07} concerns the existence, size and uniqueness of the giant component. In particular, in the setting considered here, a giant component  emerges if and only if $\frac{a+(q-1)b}{q} \PHItwo >1$. We shall henceforth assume a giant component to emerge.}. Note further that $\frac{a+(q-1)b}{q}\PHItwo \leq 1$ already implies $(a-b)^2 \PHItwo \leq q(a+b)$.

To establish \eqref{eq::con} when $\frac{a+(q-1)b}{q}\PHItwo > 1$ and $(a-b)^2 \PHItwo \leq q(a+b)$, 
we note that  $\text{Var}(\E{\sigma_u | \sigma_{\partial G_R} ,\sigma_v, G })$ is asymptotically an upper bound for $ \text{Var}( \E{\sigma_u | \sigma_v , G})$, as 
conditioning on the boundary spins $\sigma_{\partial G_R}$ of an $R$-neighbourhood around $u$ is more informative. 
 Now, we can approximate Var$(\E{\sigma_u  | \sigma_v , \sigma_{\partial G_R}, G}) \simeq $ Var$(\E{\sigma_u  |  \sigma_{\partial G_R}, G})$, because long-range correlations in this model are weak (Lemma \ref{lm::MarkovField}). Further, local neighbourhoods are w.h.p. tree-like, so that calculating the latter variance is equivalent to a certain tree-reconstruction problem  discussed in Section \ref{sec::broadcasting}. 
More specifically, we shall prove (Theorem \ref{thm::main_broadcasting}) that reconstruction of the spin of the root in a $q$-type tree (with offspring following a Poisson-mixture) based on the spins at depth $R$ (where $R \to \infty$), is impossible when $(a-b)^2 \PHItwo \leq q(a+b)$. Hence, $\text{Var}(\E{\sigma_u | \sigma_{\partial G_R} , G }) \to 0$ as $R \to \infty$.

\subsection{Background}
Without the degree correction (i.e., $\phi_1 = \ldots = \phi_n = 1$), the authors of \cite{DeAuKrFlMoCrZd11} were the first to conjecture  a phase-transition for the \emph{ordinary} SBM based on ideas from statistical physics:
\begin{conjecture}[\cite{DeAuKrFlMoCrZd11}]\label{conj}
Consider a SBM on $k$ balanced communities where edges inside a cluster are present with probability $a/n$ and between clusters with probability $b/n$. Let $M$ be the matrix with $a/n$ on the diagonal and $b/n$ on all off-diagonal elements. Let $\lambda_1$ and $\lambda_2$ be its first, respectively, second eigenvalue and let \emph{SNR} = $\frac{\lambda_2^2}{\lambda_1} = \frac{(a-b)^2}{k(a+(k-1)b)},$ the signal-to-noise-ratio. 

For any $k \geq 2$, if \emph{SNR} $ > 1$ (which is generally called the Kesten-Stigum condition), communities can be detected in polynomial time.

For $k \geq 4$, it is theoretically possible to detect communities for some \emph{SNR} $ < 1$.
\end{conjecture}

It is believed that for $k \geq 4$, a double phase-transition occurs: Detection should be easy (i.e., polynomial time) when \emph{SNR} $ > 1$, much harder (i.e., exponential time) for \emph{SNR} $\in (\tau,1]$, for some $0 < \tau < 1$, and information-theoretically impossible when \emph{SNR} $ < \tau$. 

The conjecture has been settled in the case of two communities: First in \cite{Ma14} by using a matrix counting the number of self-avoiding paths in the graph, and later, independently, in \cite{MoNeSl13}.
Further, \cite{MoNeSly15} shows that for $k=2$ , it is information-theoretically impossible to detect communities for \emph{SNR} below $1$. We shall here extend their results for the DC-SBM by relying on similar techniques.

In \cite{KrMoMoNeSlZdZh13} the 'spectral redemption conjecture' was made: detection using the second eigenvalue of the so called non-backtracking matrix would also establish the positive part. This has recently been proved\footnote{Theorems $4$ and $5$ in \cite{BoLeMa15} are actually a bit more general.} in \cite{BoLeMa15}, for any $k \geq 2$ such that $\lambda_k$ is a simple eigenvalue of $M$.

More recently, \cite{AbSa16} gave an algorithm that detects communities when\\ 
$\frac{(a-b)^2}{k(a+(k-1)b)} > 1$. 

Determining the 'hardness' of the intermediate regime (i.e., detection while below the Kesten-Stigum threshold) remains an open problem. \newline 
Positive results of spectral clustering in the DC-SBM have been obtained by various authors.
The work \cite{DaHoMcS04} introduces a reconstruction algorithm based on the matrix that is obtained by dividing each element of the adjacency matrix by the geometric mean of its row and column degrees.

 A slightly different extended stochastic block model is studied in \cite{CoLa08}: An edge is present between $u$ and $v$ with probability $\lr{1_{\{\sigma_u = \sigma_v\}}a + 1_{\{\sigma_u \neq \sigma_v\}}b } \cdot(\phi_u \phi_v) / ( \bar{\phi} n),$ where $\bar{\phi} = \frac{1}{n} \s{u=1}{n} \phi_u$, the average weight.
The main result is a polynomial time algorithm that outputs a partitioning that differs from the planted clusters on no more that $n \text{log}(\bar{\phi} ) / \bar{\phi}^{0.98}$ nodes. This recovery succeeds only under certain conditions: the minimum weight should be a fraction of the average weight and the degree of each vertex is $o(n)$. 

The article \cite{LeRi13} gives an algorithm based on the adjacency matrix of a graph together with performance guarantees. The average degree should be at least of order $\text{log} (n)$. However, since the spectrum of the adjacency matrix is dominated by the top eigenvalues \cite{ChLuVa03}, the algorithm does a poor job when the degree-sequence is very irregular. 

We propose in \cite{GuLeMa15} an algorithm that recovers consistently the block-membership of all but a vanishing fraction of nodes, even when the lowest degree is of order $\text{log} (n)$. It outperforms algorithms based on the adjacency matrix in case of heterogeneous degree-sequences.

\subsection{Outline and differences with \emph{ordinary} Stochastic Block model}
\label{ssec::outline}
We consider an associated tree reconstruction problem (see for instance \cite{EvKePeSc00,Mo04}) necessary for our analysis: given a tree, can we
deduce  the spin of the root based on all the spins at some distance $R \to \infty$ from the root?

We shall see that the $R$-neighbourhood of a vertex looks like a tree labelled with $q$ colors denoted here by $T^{\text{Poi}}$ and defined as follows.
We begin with a single particle, the root $o$, having spin $\sigma_o \in \spm$ and weight $\phi_o \in W \subset [\PHImin, \infty)$ (which we take random). The root is replaced in generation $1$ by $\Pois{\frac{a}{q} \PHI \phi_o}$ particles of spin $\sigma_o$ and  by $\Pois{\frac{b}{q} \PHI \phi_o}$ particles of spin  $s$ for each $s \in \spm \setminus \sigma_o$. Further, the weights of those particles are i.i.d. distributed following law $\NUstar$, the size-biased version of $\nu$, defined for $x \in [\PHImin, \infty)$ by
\be
\label{eq::size_biased}
\NUstar([0,x]) = \frac{1}{\PHI} \int_{\PHImin}^x y \mathrm{d} \nu(y).
\ee
 For generation $t \geq 1$, a particle with spin $\sigma$ and weight $\phi^*$ is replaced in the next generation by $\Pois{\frac{a}{2} \PHI \phi^*}$ particles with the same spin and $\Pois{\frac{b}{q} \PHI \phi_o}$ particles of each of the remaining $q-1$ spins. Again, the weights of the particles in generation $t+1$ follow in an i.i.d. fashion the law $\NUstar$. The offspring-size of an individual is thus a \textbf{Poisson-mixture} with mean $\frac{a+(q-1)b}{q}\PHItwo$.
 
  Section \ref{sec::broadcasting} deals with branching processes where the offspring is governed by a Poisson-mixture. The main theorem (i.e., Theorem \ref{thm::main_broadcasting}) deals with a reconstruction problem on these branching processes.

 In Section \ref{sec::coupling} we establish a coupling between the local neighbourhood and $T^{\text{Poi}}$. This result does not follow directly from the coupling in \cite{BoBeJa07}, because we need the weights in the graph and their counterparts in the branching process to be \emph{exactly} the same. 

Finally, in Section \ref{sec::weak} we show that long-range interactions are weak. The proof of Lemma \ref{lm::MarkovField} is  based on an idea in the proof of Lemma $4.7$ in \cite{MoNeSly15}. Note however that (besides the presence of weights) the statement of our Lemma \ref{lm::MarkovField} is slightly stronger than Lemma $4.7$ in \cite{MoNeSly15}, see below for details.

\section{Broadcasting on the branching process }
\label{sec::broadcasting}
Here we repeat without changes the definition of a Markov broadcasting process on trees given in \cite{EvKePeSc00, MoNeSly15}.
Let $\calT$ be an infinite tree with root $\rho$. Given a number $0 \leq \epsilon < 1/(q-1)$, define a random labelling $\tau \in \spm^{\calT}$ as follows: First, draw $\tau_{\rho}$ uniformly in $\spm$. Then, conditionally independently given $\tau_{\rho}$,  take every child $u$ of $\rho$ and, then with probability $1 - (q-1)\epsilon$ set $\tau_u = \tau_{\rho}$, and with probability $(q-1)\epsilon$ choose $\tau_u$ uniformly from $\spm \setminus \tau_{\rho}$  . Continue this construction recursively to obtain a labelling $\tau$ for which every vertex, independently, has probability $1-(q-1)\epsilon$ of having the same label as its parent and probability $\epsilon$ for each of the remaining spins. 

Suppose that the labels $\tau_{\partial \calT_m}$ at depth $m$ in the tree are known (here, $\tau_U = \{ \tau_i : i \in U \}$ and $\partial \calT_m$ are all vertices at distance $m$ from the root). The paper \cite{EvKePeSc00} gives precise conditions in the case of two spins as to when reconstruction of the root label is feasible using the optimal reconstruction strategy (maximum likelihood), i.e., deciding according to the sign of $\E{\tau_{\rho} |  \tau_{\partial \calT_m}}$.
Interestingly, this is completely decided by the branching number of $\mathcal{T}$ and the flip-probability $\epsilon$. The paper \cite{MoPe03} extends the results in \cite{EvKePeSc00} to the case of a general number of spins. For completeness we state both theorems here.
\begin{definition}
The branching number of a tree $\mathcal{T}$, denoted by $\emph{Br}(\calT)$, is defined as follows:
\begin{itemize}
\item If $\calT$ is finite, then $\emph{Br}(\calT) = 0$;
\item If $\calT$ is infinite, then we define the branching number in terms of percolation. Suppose that we retain each edge in the tree independently with probability $p$. Then $\emph{Br}(\calT)$ is the unique number such that: If $p <  \frac{1}{\emph{Br}(\calT)}$, then all components of the graph are finite a.s., while if $p >  \frac{1}{\emph{Br}(\calT)}$, then the graph has infinite components a.s.
\end{itemize}
\end{definition}

Remark that \cite{EvKePeSc00} does not deal with the trivial case of \emph{finite} trees. On such trees, Br$(\calT)=0$ by convention. This makes sense because, for large $m$, $\partial \calT_m = \emptyset$, and consequently  $\P{\tau_{\rho} = +|  \tau_{\partial \calT_m}} = 1/q.$ 

Theorem $1.1$ in \cite{EvKePeSc00} and Proposition $1.3$ in \cite{MoPe03} read, tailored to our needs:

\begin{theorem}(Theorem $1.1$ in \cite{EvKePeSc00})
For $q=2$, consider the problem of reconstructing $\tau_{\rho}$ from the spins $\tau_{\partial \calT_m}$ at the $m$th level of $\calT$. Define $\Delta_m$ as the difference between the probability of correct and incorrect reconstruction given the information at level $m$:
\[ \Delta_m:= \left| \P{\tau_{\rho} = +|  \tau_{\partial \calT_m}} -  \P{\tau_{\rho} = -|  \tau_{\partial \calT_m}} \right|. \]
If $\emph{Br}(\calT)(1 - 2 \epsilon)^{2} > 1$ then $\lim_{m \to \infty} \E{\Delta_m} > 0$.
\\ If, however, $\emph{Br}(\calT)(1 - 2 \epsilon)^{2} < 1$ then $\lim_{m \to \infty} \E{\Delta_m} = 0$.
\label{thm::Evans}
\end{theorem}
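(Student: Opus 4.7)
My plan is to handle the two directions of the threshold separately, both rooted in the Markov recursion along the tree with $\theta:=1-2\epsilon$ denoting the per-edge bias of the binary symmetric channel.

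For the positive direction $\mathrm{Br}(\calT)\theta^2>1$, I would construct a weighted census estimator for $\tau_\rho$ and run a second-moment argument. By definition of the branching number, pick $\lambda\in(\theta^{-2},\mathrm{Br}(\calT))$ so that $c:=\inf_{\Pi}\sum_{v\in\Pi}\lambda^{-|v|}>0$; applying max-flow/min-cut to $\calT$ with edge capacity $\lambda^{-|v|}$ on the edge leading into $v$ yields a positive flow $f$ from $\rho$ to infinity with $f_v\leq\lambda^{-|v|}$ on every edge. I would then study
\[ X_m \;=\; \sum_{v\in\partial\calT_m} f_v\,\theta^{-|v|}\,\tau_v. \]
The identity $\E{\tau_v\mid\tau_\rho}=\theta^{|v|}\tau_\rho$ yields $\E{X_m\mid\tau_\rho}=\tau_\rho\sum_v f_v\geq c$, while
\[ \cov(\tau_u,\tau_v\mid\tau_\rho) \;=\; \theta^{|u|+|v|-2|u\wedge v|}\bigl(1-\theta^{2|u\wedge v|}\bigr), \]
with $u\wedge v$ the lowest common ancestor. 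Summing first over pairs with fixed LCA at depth $k$ and then using $\sum_{|v_0|=k} f_{v_0}^2 \leq \lambda^{-k}$ (since the flow is unital per level and pointwise bounded by $\lambda^{-k}$) bounds $\Var(X_m\mid\tau_\rho)$ by a geometric series of ratio $(\lambda\theta^2)^{-1}<1$, hence uniformly in $m$. Chebyshev then gives $\liminf_m\E{\Delta_m}>0$.

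For the negative direction $\mathrm{Br}(\calT)\theta^2<1$, the target is the universal bound
\[ \E{\Delta_m^2} \;\leq\; \inf_{\Pi}\sum_{v\in\Pi}\theta^{2|v|}, \]
over finite cutsets $\Pi$ at depth at most $m$; since $\theta^{-2}>\mathrm{Br}(\calT)$, the right-hand side can be driven to $0$ by choosing $\Pi$ appropriately, so $\E{\Delta_m}\to 0$. To prove the bound I would introduce the subtree magnetization $Y_v:=2\P{\tau_v=+\mid\tau_{\partial\calT_m\cap\calT_v}}-1$, which by the Markov-on-tree property satisfies the nonlinear recursion $Y_v=\tanh\!\bigl(\sum_i\mathrm{atanh}(\theta Y_{v_i})\bigr)$ across the children $v_i$ of $v$. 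An $L^2$-contraction of the form $\E{Y_v^2}\leq\theta^2\sum_i\E{Y_{v_i}^2}$, iterated from $|Y_v|\leq 1$ at the cutset down to the root, would produce the claimed bound, using that $\Delta_m=|Y_\rho|$.

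The main obstacle I anticipate is precisely this $L^2$-contraction: the recursion is genuinely nonlinear, and the bare Lipschitz estimates $|\tanh x|\leq|x|$ and $|\mathrm{atanh}(\theta y)|\geq\theta|y|$ pull in opposite directions, so a naive argument does not reach the Kesten-Stigum constant $\theta^2$. Obtaining that sharp constant requires an information-theoretic detour through a contractive functional on the BSC (Hellinger, KL, or $\chi^2$, with tensorization across independent subtrees) or, equivalently, the electrical-network/percolation route pioneered by Pemantle and Peres. The positive direction, by contrast, is a clean max-flow-plus-Chebyshev computation once the weights have been chosen.
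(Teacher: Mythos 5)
First, a point of calibration: the paper does not prove this statement at all --- it is quoted as Theorem 1.1 of Evans--Kenyon--Peres--Schulman and used as a black box. So your proposal can only be measured against the original proof, whose architecture you have reproduced correctly: a flow-weighted census plus a second-moment argument for the supercritical direction, and a cutset bound of the form $\E{\Delta_m^2}\leq \inf_{\Pi}\sum_{v\in\Pi}\theta^{2|v|}$ for the subcritical one. The positive direction as you describe it is essentially complete: the conditional covariance formula, the grouping of pairs by lowest common ancestor, and the bound ``$\sum_{|w|=k}(\text{flow through }w)^2\leq\lambda^{-k}$ times the total flow'' give $\Var(X_m\mid\tau_\rho)$ bounded by a convergent geometric series uniformly in $m$; since $\E{\Delta_m}$ equals the best advantage of any boundary-measurable estimator and is non-increasing in $m$ by data processing, Chebyshev applied to $\sgn(X_m)$ yields $\lim_m\E{\Delta_m}>0$.

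The gap is exactly where you place it, and it is substantive rather than cosmetic. The inequality $\E{Y_v^2}\leq\theta^2\sum_i\E{Y_{v_i}^2}$ carries the entire weight of the subcritical direction, and nothing in your sketch produces it: from $Y_v=\tanh\bigl(\sum_i\mathrm{atanh}(\theta Y_{v_i})\bigr)$ the bare estimates $|\tanh x|\leq|x|$ and $\mathrm{atanh}(\theta y)\leq y\,\mathrm{atanh}(\theta)$ for $y\in[0,1]$ only give a per-level factor $\mathrm{atanh}(\theta)$, which is strictly larger than $\theta$ and nowhere near the Kesten--Stigum constant $\theta^2$; such a bound covers only a strictly smaller regime than $\mathrm{Br}(\calT)\theta^2<1$. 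Reaching $\theta^2$ requires either (i) the EKPS route --- the strong data-processing inequality for the binary symmetric channel, whose contraction coefficient for mutual information is exactly $(1-2\epsilon)^2$, combined with a subadditivity of the root--boundary information over the children's subtrees, itself a nontrivial lemma exploiting conditional independence given the root --- or (ii) the Pemantle--Peres percolation/electrical-network coupling. You correctly name both escape routes but execute neither, so as written the subcritical half is a map of where the proof lives rather than a proof. This does no damage to the paper, which only cites the result; but as a self-contained argument the contraction lemma must actually be supplied.
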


\begin{theorem}[Proposition $4.2$ in \cite{MoPe03}]
\label{thm::Evans_general}
For general $q \geq 2$, consider the problem of reconstructing $\tau_{\rho}$ from the spins $\tau_{\partial \calT_m}$ at the $m-$th level of $\calT$. Define $\textbf{P}_m^s$ as the conditional distribution of $\tau_{\partial \calT_m}$ given that $\sigma_{\rho} = s$. Then, $\lim_{m \to \infty} \|\textbf{P}_m^i - \textbf{P}_m^j \|_{\emph{TV}} = 0$ if $\emph{Br}(\calT)\frac{(1 - q \epsilon)^{2}}{1-(q-2)\epsilon} < 1.$ 
\end{theorem}
\begin{remark}
\label{rem::Evans_general}
Note that if $\emph{Br}(\calT)\frac{(1 - q \epsilon)^{2}}{1-(q-2)\epsilon} < 1,$ then
\BA & \E{ \left| \P{\tau_{\rho}=i|\tau_{\partial \calT_m}} - \P{\tau_{\rho}=j|\tau_{\partial \calT_m}} \right|} 
\\ &  = \sum_A \P{\tau_{\partial \calT_m} = A} \left| \P{\tau_{\rho}=i|\tau_{\partial \calT_m} = A} - \P{\tau_{\rho}=j|\tau_{\partial \calT_m} = A}  \right|
\\ &=  \frac{1}{q} \sum_A \left| \textbf{P}_m^i(A) - \textbf{P}_m^j(A) \right| \to 0,
\EA
as $m \to \infty$. Thus Theorem \ref{thm::Evans_general} implies Theorem \ref{thm::Evans}.
\end{remark}

 Note that in these theorems the tree is fixed, compared to the setting in this paper where  the multi-type branching process of Section \ref{ssec::outline} is considered.
But, it can be easily seen that the spins on a fixed instance $\calT$ of $T^{\text{Poi}}$ are distributed according to the above broadcasting process.

We thus need to calculate the branching number of a typical instance $\calT$: 
\begin{proposition}\label{prop::branching}
 Consider the multi-type branching process $T^{\emph{Poi}}$, where the root has spin drawn uniformly from $\spm$ and weight governed by $\nu$.  
Then, given the event that the branching process does not go extinct,  $\emph{Br}\lr{T^{\emph{Poi}}} \leq \frac{a+(q-1)b}{q} \PHItwo$ almost surely.
\end{proposition}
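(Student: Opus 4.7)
The plan is to upper-bound $\emph{Br}(T^{\text{Poi}})$ by using the generations of the tree themselves as test cutsets and showing that, on non-extinction, the generation sizes grow at most exponentially at rate $m := \frac{a+b}{2}\PHItwo$.

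First, I would let $\Pi_n$ denote the $n$-th generation of $T^{\text{Poi}}$ and introduce the weighted generation sum $Z_n := \sum_{v \in \Pi_n} \phi_v$, with $Z_0 = \phi_{\rho}$. The key step is to show that $W_n := Z_n / m^n$ is a non-negative martingale with respect to the natural filtration $\mathcal{F}_n$ of the process. A particle $u \in \Pi_n$ with weight $\phi_u$ produces in total $\text{Poi}\bigl(\frac{a+b}{2}\PHI \phi_u\bigr)$ children (summing the same-spin and opposite-spin Poissons), whose weights are i.i.d.\ draws from $\NUstar$. Since by \eqref{eq::size_biased},
\[
\int y \, \mathrm{d}\NUstar(y) = \frac{1}{\PHI}\int y^2 \, \mathrm{d}\nu(y) = \frac{\PHItwo}{\PHI},
\]
Wald's identity yields $\E{S_u \mid \phi_u} = \frac{a+b}{2}\PHI \phi_u \cdot \frac{\PHItwo}{\PHI} = m \phi_u$, where $S_u$ is the weight-sum of $u$'s offspring. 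Summing over $u \in \Pi_n$ gives $\E{Z_{n+1} \mid \mathcal{F}_n} = m Z_n$, hence the martingale property. By the non-negative martingale convergence theorem, $W_n$ converges a.s.\ to a finite limit $W_\infty$; in particular $\sup_n W_n < \infty$ almost surely.

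Second, I would use the lower bound on weights. Since $\phi_v \geq \PHImin > 0$ for every $v$, the generation size $N_n := |\Pi_n|$ satisfies $N_n \leq Z_n / \PHImin = (m^n / \PHImin) W_n$, so there exists an a.s.\ finite (random) constant $C$ with $N_n \leq C m^n$ for all $n \geq 0$.

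Third, on non-extinction each $\Pi_n$ is a non-empty cutset of $T^{\text{Poi}}$. For any fixed $\lambda > m$,
\[
\inf_{\Pi} \sum_{v \in \Pi} \lambda^{-|v|} \;\leq\; \sum_{v \in \Pi_n} \lambda^{-n} \;=\; N_n \lambda^{-n} \;\leq\; C \bigl(m/\lambda\bigr)^n \To 0
\]
almost surely as $n \to \infty$. Hence $\lambda$ does not belong to the supremum-defining set of $\emph{Br}(T^{\text{Poi}})$, so $\emph{Br}(T^{\text{Poi}}) \leq \lambda$. Letting $\lambda \downarrow m$ yields $\emph{Br}(T^{\text{Poi}}) \leq \frac{a+b}{2}\PHItwo$ almost surely on non-extinction.

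The main conceptual obstacle is identifying the correct functional in step one: it is the sum of weights, not the generation size, that carries the martingale structure, and this is exactly what makes the eigenvalue $\frac{a+b}{2}\PHItwo$ (rather than $\frac{a+b}{2}(\PHI)^2$) appear. The size-biased mean identity $\int y\,\mathrm{d}\NUstar(y) = \PHItwo/\PHI$ is the reason $\PHItwo$ enters the threshold. Once the martingale is in hand, the reduction from $Z_n$ to $N_n$ via the uniform lower bound $\PHImin > 0$ and the cutset comparison are essentially bookkeeping.
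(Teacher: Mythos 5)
Your proof is correct, and it takes a genuinely different route from the paper's. The paper's argument is essentially a reduction plus a citation: it decomposes $T^{\text{Poi}}$ at the root into $D$ subtrees which, conditionally, are i.i.d.\ Galton--Watson trees with offspring mean $\frac{a+b}{2}\PHItwo$ (the key point being that all non-root weights follow $\NUstar$, so the offspring counts are i.i.d.\ Poisson mixtures), and then invokes Lyons's Proposition~6.4 to conclude $\emph{Br}(T^*_u)=\frac{a+b}{2}\PHItwo$ a.s.\ for each subtree. You instead give a self-contained first-moment argument: the weight-sum martingale $Z_n/m^n$ (whose martingale property rests on exactly the same size-biasing identity $\int y\,\mathrm{d}\NUstar(y)=\PHItwo/\PHI$ that makes the paper's subtrees have mean $m$), the uniform bound $\phi_v\geq\PHImin>0$ to pass from $Z_n$ to $|\Pi_n|$, and generations as test cutsets. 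Each approach buys something: the paper's gives the exact value of the branching number (equality, which it remarks on but does not need), while yours proves only the stated upper bound but is elementary and avoids any external result on branching numbers of Galton--Watson trees. All the steps in your argument check out --- the generations are indeed cutsets on non-extinction, the set $\{\lambda\geq 1:\inf_\Pi\sum_{v\in\Pi}\lambda^{-|v|}>0\}$ is downward closed so excluding each $\lambda>m$ does bound the supremum, and the filtration should be understood to contain the weights of all particles up to generation $n$ so that $Z_n$ is measurable --- and the upper bound is all that is used downstream (Theorem~\ref{thm::main_broadcasting} only needs $\emph{Br}(T)(1-2\epsilon)^2<1$).
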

\begin{proof}
Denote the multi-type branching process by $T$. 
Assume w.l.o.g. that the root has $D \geq 1$ children denoted as $1, \ldots, D$. Denote by $T^{*}_u$ the subtree of all particles with common ancestor $u$.
We observe that if $\text{Br}\lr{T^{*}_u} < c$ for all $u$, then
$\text {Br}\lr{T} < c$.

Now, conditioned on the spin of the root, $\lr{T^{*}_u}_{u=1}^D$ are i.i.d. copies of $T^{\text {Poi}}$ with weight governed by the biased law $\NUstar$. The latter is a Galton-Watson process with offspring mean $\frac{a+(q-1)b}{q} \PHItwo$. If it dies out, then $\text{Br}\lr{T^{*}_u} = 0$ by definition. Hence, given that the process survives (and thus necessarily $\frac{a+(q-1)b}{q} \PHItwo > 1$), Proposition $6.4$ in \cite{Ly90} entails that $\text{Br}\lr{T^{*}_u} = \frac{a+(q-1)b}{q} \PHItwo$ a.s. 
\end{proof}

Note that it can in fact be easily proved that $\text{Br}\lr{T^{\text{Poi}}} = \frac{a+(q-1)b}{q} \PHItwo$ almost surely, given that the process survives.

We conclude with the main theorem of this section.  
\begin{theorem}
\label{thm::main_broadcasting}
Consider the multi-type branching process $T^{\emph{Poi}}$, where the root has spin drawn uniformly from $\spm$ and weight governed by $\nu$.  Denote the branching process by $T$ and its spins by $\tau^n$. Further, let $R$ be an unbounded non-decreasing function. Assume that $(a-b)^2 \PHItwo < q(a+b)$, then, for any $s \in \spm$,
\[  \P{\tau_{\rho} = s \left|T_{R(n)}, \tau_{\partial T_{R(n)}} \right.}  \overset{\mathbb{P}} \to \frac{1}{q}, \]
as $n \to \infty$.
\end{theorem}
\begin{proof}
Since $\epsilon = \frac{b}{a+(q-1)b}$, Proposition \ref{prop::branching} gives that Br$(\calT)\frac{(1 - q \epsilon)^{2}}{1-(q-2)\epsilon} < 1$ almost surely. Theorem \ref{thm::Evans_general} (and Remark \ref{rem::Evans_general}) then completes the proof. 
\end{proof}

\begin{remark}
In \eqref{eq::ComTree} we use a coupling between the Poisson tree and the local neighbourhood around a fixed vertex $u$, while we condition on the spins of all vertices \emph{exactly} distance $R(n)$ away from $u$. If there are no such vertices, i.e., when the neighbourhood 'dies out', then this does not entail extra information. Hence the convention that Br$(\calT)=0$ for a finite tree $\calT$.
\end{remark}

\section{Coupling of local neighbourhood }
\label{sec::coupling}
This section has as its objective to establish a coupling between the local neighbourhood of an arbitrary fixed vertex in the DC-SBM and $T^{\text{Poi}}$. The main result is  the following theorem, where we let $T$, $\tau$, and $\psi$ be random instances of $T^{\text{Poi}}$, its spins and its weights, respectively.

\begin{theorem}
Let $\rho$ be a uniformly picked vertex in $V(G)$, where for each $n$, $G = G(n)$ is an instance of the DC-SBM. There exists an unbounded non-decreasing function $R: \mathbb{N} \to \mathbb{N}$ such that 
\[\| \lr{G_{R(n)}(\rho), \sigma_{G_{R(n)}}, \phi_{G_{R(n)}}} -
  \lr{ T_{R(n)} , \tau_{T_{R(n)}}, \psi_{T_{R(n)}}} \|_{\emph{TV}}
  = 1 - o_n(1),\] 
and, 
\[ \P{|G_{R(n)}| \leq n^{1/9}} = 1 - o_n(1). \]
\label{thm::coupling}
\end{theorem}
\begin{remark}
In case the weights are bounded by some constant $\PHImax$, we can take $R(n) =  C \log(n)$, with $C < \frac{1 - \log(4/e)}{3 \log(2 \cdot \phi_{\text{max}}^2 \cdot(a \vee b))}$ and show that the coupling error is bounded by $n^{- \frac{1}{2} \log (4/e)}$. See the version of September 2016 of this work on Arxiv.
\end{remark}
We defer its proof to the end of this section. It uses an alternative description of the branching process in Section \ref{sec::broadcasting}.
\subsection{Alternative description of branching process}
For notational convenience, we restrict ourselves here to the case of two communities only. The proof for a general number of communities follows then analogously.
We obtain an alternative description of the graph by considering a particle $u$ with spin $\sigma_u$ and weight $\phi_u$ to be of type $x_u = \phi_u \sigma_u \in S = -W \cup W$. We denote the law of $x_u$ by $\mu$, i.e., for $A \subset S$, $\mu(A) = \int_A \frac{1}{2} \mathrm{d} \nu(|x|).$ Two distinct vertices $u$ and $v$ are then joined by an edge with probability $\frac{\kappa(x_u, x_v)}{n}$, where 
 $\kappa: S \times S \to \mathbb{R}$ is defined for $(x,y) \in S \times S$ by 
\be \kappa(x,y) = |xy|\lr{1_{\{xy > 0\}}a + 1_{\{xy < 0\}}b }. \label{eq::standard_kernel}\ee

Analogously, we obtain the following equivalent description of the branching process: We begin with a single particle $o$ of type $x_o$ governed by $\mu$, giving birth to Poi$(\lambda_{x_o}(S))$ children, where for $x \in S$, and $A \subset S$, 
\be \lambda_x(A) = \int_A \kappa(x,y) \mathrm d \mu(y). \label{def::lambda_x} \ee
conditioned on $x_o$ the children have i.i.d. types governed by $\MUstar_{x_o}$ \footnote{Note that if $y$ has law $\MUstar_x$, then for any $A \subset W$, $\P{\text{sign}(y) = \text{sign}(x), |y| \in A} =\frac{a}{a+b} \int_A z \frac{\mathrm{d} \nu(z)}{\PHI} = \P{\text{sign}(y) = \text{sign}(x)} \P{|y| \in A}$. Hence, we can identify sign$(y)$ with the particle's spin and $|y|$ with its \emph{independent} weight.}, where for $x \in S$, and $A \subset S$,
\be \MUstar_{x}(A) 
= \frac{\lambda_x(A)}{\lambda_x(S)} 
=  \int_A \lr{\frac{a}{a+b}\IND{xy > 0}  +  \frac{b}{a+b}\IND{xy < 0}} |y| \frac{ \mathrm d \nu(|y|) }{\PHI}. \ee
For generation $t \geq 1$, all particles give birth independently in the following way: A particle with type $x^*$ is replaced in the next generation by Poi$(\lambda_{x^*}(S))$  children, again with i.i.d. types governed by $\MUstar_{x^*}$. 

In case of a general number of communities, we let $\mu$ be the product measure of the uniform measure on $\spm$ with the measure $\nu$. I.e., for $s \in \spm$ and $A \subset [\PHImin, \infty)$, we have $\mu(\{s\} \times A) = \frac{1}{q} \cdot \nu(A).$

In \cite{BoBeJa07} it is shown that local neighbourhoods of the graph are described by the above branching process, if we \emph{ignore} the types. (To be precise: the equivalent description used in \cite{BoBeJa07} is that a particle of type $x$ gives birth to  Poi$(\lambda_{x}(A))$  children with type in $A$, for any $A \subset S$. Those numbers are independent for different sets $A$ and different particles.)

 The coupling-technique in \cite{BoBeJa07} uses a discretization of $\kappa$ as an intermediate step, thereby losing some information: types in the tree deviate slightly from their counterparts in the graph. We shall therefore use another coupling method, presented below, so that \textbf{the types in graph and branching process are exactly the same}. 
\subsection{Coupling}
We use the following exploration process:
At time $m=0$, choose a vertex $\rho$ uniformly in $V(G)$, where $G$ is an instance of the DC-SBM. Initially, it is the only active vertex:  $\mathcal{A}(0) = \{\rho\}$. All other vertices are neutral at start: $\mathcal{U}(0) = V(G) \setminus \{\rho\}$. No vertex has been explored yet: $\mathcal{E}(0) = \emptyset$. At each time $m \geq 0$ we arbitrarily pick an active vertex $u$ in $\mathcal{A}(m)$ that has shortest distance to $\rho$, and explore all its neighbours in $\mathcal{U}(m),$ the set of unexplored vertices. If $uv \in E(G)$ for $v \in \mathcal{U}(m)$, then we set $v$ active in step $m+1$, otherwise it remains neutral. At the end of step $m$, we designate $u$ to be explored. Thus,
\[ \calE(m+1) = \calE(m) \cup \{u\}, \]
\[ \calA(m+1) = \lr{ \calA(m) \setminus \{u\}  } \cup \lr{ \mathcal{N}(u) \cap  \calU(m) }, \]
and,
\[ \calU(m+1) = \calU(m) \setminus \mathcal{N}(u). \]
Our aim in this section is to show that the exploration process and the branching process are equal upto depth $R(n)$ (defined in Theorem \ref{thm::coupling}) with probability tending to one for large $n$. We do this in two steps:

Firstly, we establish that the types of the vertices in $\calU(m)$ are i.i.d. with law $\mu^{(m)}$ (defined in \eqref{eq::mu_m} below) such that 
\[ \left| \left| \mu^{(m)} - \mu \right| \right|_{\text{TV}} = \bigO \lr{ n^{-  \beta / 8} + mn^{-3/4}}. \]
This is the content of the following:
\begin{lemma}
The following holds conditioned that all the weights are smaller than $n^{\alpha}$, with $\alpha = 1/8$:
Let $1, \ldots, m$ be the vertices in $\calE(m)$, with types $X_1 = x_1, \ldots, X_m=x_m$. Then, the vertices in $\calU(m)$ have i.i.d. types with law
$ \mu^{(m)} = \mu^{(m)}_{x_1, \ldots, x_m}, $
 where
\be \mathrm{d}\mu^{(m)}(\cdot) = \frac{g(\cdot) \mathrm{d} \mu_{\alpha}(\cdot) }{\int_S g(z) \mathrm{d} \mu_{\alpha}(z) }, \label{eq::mu_m} \ee
with $\mu_{\alpha}$ denoting the measure of the types conditioned that all weights are bounded by $n^{\alpha}$, and where,
\be g(\cdot)  = \prod_{i=1}^m \lr{1 - \frac{\kappa(x_i, \cdot)}{n}}. \label{eq::g} \ee
Further, for all $(x_1, \ldots, x_m)$:
\[ \left| \left| \mu^{(m)}_{x_1, \ldots, x_m} - \mu \right| \right|_{\text{TV}} = \bigO \lr{ n^{- \alpha \beta} + mn^{2\alpha - 1}}. \] \label{lm::IID_Reservoir}
\end{lemma}
Secondly, if $u$ has type $X = x \in S$, then its $D$ neighbours in $\calU(m)$ (i.e., those vertices that will be added to $\calA(m+1)$) have i.i.d. types with a law $\mu^{*(m+1)}_x$ (defined in \eqref{eq::nu_x_m} below), which is $\bigO \lr{n^{-3/8}}$ away from $\MUstar_x$ in total variation distance. Further, the total variation distance between the number of neighbours $D$ and 
$\text{Poi}\lr{  \lambda_x(S) }$ is $\bigO \lr{n^{-1/4}}$:

\begin{lemma}
The following holds conditioned that all the weights are smaller than $n^{\alpha}$, with $\alpha = 1/8$:
Assume $u$ has type $X=x$. Let $D$ be the number of neighbours $u$ has in $\calU(m)$. Then, the types of those neighbours are i.i.d. with law $\mu^{*(m)}_x$, where
\be \mathrm{d} \mu^{*(m)}_x(\cdot) =  \frac{ \ \kappa(x,\cdot) \mathrm{d} \mu^{(m)}(\cdot) }{  \int_S   \kappa(x,y) \mathrm{d} \mu^{(m)}(y) }  . \label{eq::nu_x_m}\ee
 For large $n$ and $m = o(n^{1/4})$,
\be \left| \left| \mu^{*(m)}_x - \MUstar_x \right| \right|_{\text{TV}} = \bigO \lr{ n^{\alpha(1 -  \beta)} + m n^{3 \alpha - 1} + n^{-\alpha \beta / 2}} = \bigO \lr{n^{-3/8}}. \label{eq::TV_nu_x_m} \ee
Further, 
\be \left| \left| D - \emph{Poi}\lr{  \lambda_x(S)} \right| \right|_{\text{TV}} = \bigO \lr{n^{(1 - \beta/2) \cdot 1/8} + n^{-1/4}} = \bigO \lr{n^{-1/4}}. \label{eq::TV_D} \ee
\label{lm::ActiveNeighbours}
\end{lemma}

To establish the desired coupling, we need to show that certain events happen with high probability. To define those events, we need some notation:
For $u \in \partial G_r$ (we identify $\partial G_r = \{ 1, \ldots, |\partial G_r| \}$), put
\[
D_u = |\mathcal{N}(u) \cap \mathcal{U}(|G_{r-1}| + u -1)|. \]
Conditioned that $u$ has type $X_u = x_u$, let
\[\widehat{D}_u = \text{Poi}\lr{  \lambda_{x_u}(S) }.\]
Further, for $v \in \{1, \ldots, D_u \}$, let $U_{uv}$ denote the type of child $v$ of vertex $u$ and let $\widehat{U}_{uv}$ be a random variable with law $\MUstar_{x_u}$. We assume that $\{ \widehat{U}_{uv} \}_v$ are independent conditioned on $X_u = x_u$.

We put the function $g: s \mapsto 2^s - 1$ and define the events
\[ A_{r+1} = \{ \forall u \in \partial G_r : D_u = \widehat{D}_u \}, \]
\[ B_{r+1} = \{ \forall u \in \partial G_r, v \in \{1, \ldots, D_u \} :  U_{uv} = \widehat{U}_{uv} \}, \]
\[ C_{r} = \{ |\partial G_s| \leq \log^{g(s)}(n) \  \forall s \leq r \}, \]
and their intersection \[E_{r} =  \bigcap_{s = 1}^r \{ A_{s} \cap B_{s} \cap C_{s} \}. \]
Further, we let $K_r$ be the event that no vertex outside $G_{r}$ has more than one neighbour in $G_{r}$ and that there are no edges in $\partial G_r$ (this implies that the neighbourhood is indeed a tree).

The events $E_{r}$ and $K_r$ happen with high probability:
\begin{lemma}
The following holds conditioned that all the weights are smaller than $n^{\alpha}$, with $\alpha = 1/8$: \emph{Fix} $R \geq 0$. Then, for $r  \leq R$, 
\[ \mathbb{P} \lr{E_{r+1}| E_{r}} = 1 - o_n(1). \]
\label{lm::InductionE_R}
\end{lemma}
\begin{lemma} 
The following holds conditioned that all the weights are smaller than $n^{\alpha}$, with $\alpha = 1/8$: \emph{Fix} $R \geq 0$. Then, for $r  \leq R$,
\[ \P{K_r | C_{R}} = 1 - o_n(1). \]
\label{lm::D_r}
\end{lemma}
\begin{proof}[Proof of Lemma \ref{lm::IID_Reservoir}]
Recall that we assume that all weights are bounded by $n^{\alpha}$. Consider vertex $v \in \mathcal{U}(m)$ with type $Y$. 
We show first that, conditioned on $v \notin \mathcal{N}(1, \ldots, m)$ and $X_1 = x_1, \ldots, X_m = x_m$, $Y$ has law $\mu^{(m)}_{x_1, \ldots, x_m}$.
From Bayes theorem we have, for $y \in S$, 
\be \ba  & \P{Y \leq y | v \notin \mathcal{N}(1, \ldots, m) ,  X_1 = x_1, \ldots, X_m = x_m} \\  &\quad =  \frac{\P{Y \leq y } \P{v \notin \mathcal{N}(1, \ldots, m) | Y \leq y, X_1 = x_1, \ldots, X_m = x_m}}{\P{v \notin \mathcal{N}(1, \ldots, m) | X_1 = x_1, \ldots, X_m = x_m}},
\ea \label{eq::int_mu_m}\ee
since $\P{Y \leq y | X_1 = x_1, \ldots, X_m = x_m} = \P{Y \leq y }$.
Recall \eqref{eq::g} and observe that 
\[ g(\cdot) = \P{v \notin \mathcal{N}(1, \ldots, m) | Y = \cdot, X_1 = x_1, \ldots, X_m = x_m}.  \]
Hence, the denominator in \eqref{eq::int_mu_m} is just $\int_S g(z) \mathrm{d} \mu(z)$ and 
evaluating the numerator yields $\int_{-\infty}^y g(z) \mathrm{d} \mu(z)$. We thus obtain \eqref{eq::mu_m}.

Since for $|y| \leq \bigO \lr{n^{\alpha}}$,  $\mathrm{d}\mu_{\alpha}(y) = \frac{\mathrm{d}\mu(y)}{\P{\phi \leq n^{\alpha}}} $, it follows that $\| \mu_{\alpha} -  \mu \|_{\text{TV}} = \bigO \lr{n^{-\alpha \beta}}$.

To bound $\| \mu_{\alpha} -  \mu^{(m)} \|_{\text{TV}}$, note that (in view of \eqref{eq::standard_kernel}) $g(y) = 1 - \bigO \lr{m n^{2\alpha - 1}}$, for $|y| \leq \bigO \lr{n^{\alpha}}$. Thus, $I:= \int_S g(z) \mathrm{d} \mu_{\alpha}(z) = 1 - \bigO \lr{m n^{2\alpha - 1}}.$ Therefore, 
\[ \ba
\left|\left|\mu^{(m)} - \mu_{\alpha} \right|\right|_{\text{TV}} &\leq \int_S \left| \frac{g(y)  }{ I } - 1\right| \mathrm{d} \mu_{\alpha}(y)    
= \bigO \lr{m n^{2\alpha - 1}}.
 \ea \] 
We finish by invoking the triangle inequality. 
\end{proof}

\begin{proof}[Proof of Lemma \ref{lm::ActiveNeighbours}]
Put $n_m = |\mathcal{U}(m)|$ and let  $ Y_1, \ldots, Y_D$ denote the types of the neighbours of $u$. 

 Let $f_1, \ldots, f_n$ be arbitrary measurable functions. The first claim follows if we prove that 
\be \E{ \left. \mathrm{e}^{ - \s{j=1}{D} f_j(Y_j) } \right| D = d } = \prod_{j=1}^d \lr{ \int_S  \mathrm{e}^{-f_j(y)}  \mathrm{d} \mu^{*(m)}_x(y) }. \label{eq::iid} \ee
Now, abbreviating conditioning on $\mathcal{N}(u) \cap \mathcal{U}(m) = F$ by $F$, we have,
\[ \ba
& \E{  \mathrm{e}^{ - \s{j=1}{D} f_j(Y_j) } 1_{D=d} } \\
&= \sum_{F \subset [n_m], |F| = d}  \E{ \left.  \mathrm{e}^{ - \s{j \in F}{} f_j(Y_j) } \right| F } \cdot \lr{1 - \frac{1}{n} \int_S \kappa(x,y) \mathrm{d} \mu^{(m)}(y)}^{n_m - d} \\ &\quad \cdot \lr{\frac{1}{n} \int_S \kappa(x,y) \mathrm{d} \mu^{(m)}(y)}^{ d}. 
 \ea \]
We have, 
\[ \ba \P{D = d} &=   {n_m \choose d} \lr{ 1 - \frac{1}{n} \int_S  \kappa(x,y) \mathrm{d} \mu^{(m)}(y) }^{n_m - d} \\ &\quad\quad \cdot \lr{\frac{1}{n} \int_S  \kappa(x,y) \mathrm{d} \mu^{(m)}(y) }^{d}. \ea \]
Hence, 
\[ \E{ \left. \mathrm{e}^{ - \s{j=1}{D} f_j(Y_j) } \right| D = d } = \frac{1}{{n_m \choose d}} \sum_{F \subset [n_m], |F| = d} \E{ \left.  \mathrm{e}^{ - \s{j \in F}{} f_j(Y_j) } \right| F }.\]
Conditioned on  $F \subset [n_m]$, the types $(Y_j)_{j \in F}$ are i.i.d., thus 
\[ \ba 
\E{ \left.  \mathrm{e}^{ - \s{j \in F}{} f_j(Y_j) } \right| F } = \prod_{j=1}^d \lr{ \frac{ \int_S  \mathrm{e}^{-f_j(y)} \frac{\kappa(x,y)}{n} \mathrm{d} \mu^{(m)}(y) }{  \int_S   \frac{\kappa(x,y)}{n} \mathrm{d} \mu^{(m)}(y) } },
\ea \]
which combined with \eqref{eq::nu_x_m} gives \eqref{eq::iid}, our first claim.

 Further, 
 \BA \| \mu^{*(m)}_x - \MUstar_x \|_{\text{TV}} &\leq   \int_S f_x(y) \left|  \frac{\mathrm{d} \mu^{(m)} (y)}{I_x^{(m)}} - \frac{\mathrm{d} \mu(y)}{I_x} \right| \\
 &= \frac{1}{I_x} \int_S f_x(y) \left|  \mathrm{d} \mu^{(m)} (y) \lr{1 + \bigO \lr{I_x^{(m)} - I_x }} - \mathrm{d} \mu(y) \right|  
, \label{eq::TV_biased}\EA
where $f_x(y) = \lr{ \indicator{xy > 0}a + \indicator{xy < 0}b} |y|, $
$I_x^{(m)} = \int_S f_x(z) \mathrm{d} \mu^{(m)} (z)$ and
$I_x = \int_S f_x(z) \mathrm{d} \mu (z)$. 
Now, 
\BA |I_x^{(m)} - I_x| &\leq \bigO \lr{n^{\alpha}} \int_{|z| \leq n^{\alpha}} |\mathrm{d} \mu^{(m)} (z) - \mathrm{d} \mu (z)| +  \int_{|z| > n^{\alpha}} |z| \mathrm{d} \mu (z) \\
&= \bigO \lr{ n^{\alpha - \alpha \beta} + m n^{3 \alpha - 1} + n^{-\alpha \beta / 2}},  \EA 
where we used the proof of the previous lemma to bound the first term and Cauchy-Schwartz inequality for the second term. Now, the right-hand side in \eqref{eq::TV_biased} is thus of the same order (since the weights have expectation).  

For the last claim, observe that $D = $ Bin$(n_m,p)$, where \\$p  = \frac{1}{n} \int_S \kappa(x,y) \mathrm{d} \mu^{(m)}(y)$. Hence, since the weights have bounded first moment, 
\[ \left| \left| \text{Bin}(n_m,p) - \text{Poi}\lr{ n_m p } \right| \right|_{\text{TV}} \leq \s{i=1}{n_m} p^2 = \bigO \lr{n^{-3/4}}. \]
Standard bounds for Poisson random variables entail the existence of a constant $C_{\text{Poi}} \geq 1$ such that $||$Poi$(\mu) - $ Poi$(\lambda)||_{\text{TV}} \leq C_{\text{Poi}}|\mu - \lambda|$. Consequently,
\[ \ba \frac{1}{C_{\text{Poi}}} \left| \left| \text{Poi}(n_m p) - \text{Poi}\lr{ \lambda_x(S) } \right| \right|_{\text{TV}} &\leq |n_m-n|p + |x| |I_x^{(m)} - I_x| \\
&\leq \frac{|n_m - n|}{n} n^{\alpha}  \\
&\quad + \bigO\lr{ n^{2\alpha - \alpha \beta} + m n^{4 \alpha - 1}  + n^{\alpha - \alpha \beta/2} }. \ea \]
Thus, by the triangle inequality,
\[ \left| \left| \text{Bin}(n_m,p) - \text{Poi}\lr{  \lambda_x(S) } \right| \right|_{\text{TV}} = \bigO \lr{n^{(1 - \beta/2) \cdot 1/8} + n^{-1/4}}.  \]

\end{proof}

\begin{proof}[Proof of Lemma \ref{lm::InductionE_R}]
Write $n_r = |\partial G_r|$. 
We have
\[ \P{E_{r+1}| E_{r}} \geq \P{  B_{r+1}| E_{r}} - \P{\neg A_{r+1}| E_{r}} - \P{ \neg C_{r+1}| E_{r}} . \]
Now, 
\be \ba
\P{  B_{r+1}| E_{r},n_r} 
&\geq 1 -  \sum_{u=1}^{n_r} \P{ \left.  \neg B^{(u)}_{r+1} \right| \bigcap_{v=1}^{u-1} B^{(v)}_{r+1} ,E_{r}},
 \ea \label{eq::Br}\ee
where $ B^{(u)}_{r+1} = \{ \forall w \in \{1, \ldots, D_u \} :  U_{uw} = \widehat{U}_{uw} \}. $
Denote the already explored vertices by $1, \ldots, m$ (where $m = |G_{r-1}| + u - 1$) and their types as $X_1, \ldots, X_m$. 
Conditioned on those types, the vertices in $\mathcal{U}(m)$ are i.i.d. with distribution $\mu^{(m)}$. Hence:
\be \ba
  &\P{ \left.   B^{(u)}_{r+1} \right| \bigcap_{v=1}^{u-1} B^{(v)}_{r+1} ,E_{r}, n_r, X_1, \ldots, X_m}  
 =  \P{ \left.   B^{(u)}_{r+1} \right|  X_1, \ldots, X_m } \\
 &\geq \P{\left.    B^{(u)}_{r+1} \right| D_u \leq \log(n) \log^{g(r)}(n),  X_1, \ldots, X_m  } \\
 &\quad\quad \cdot  \P{ \left. D_u \leq \log(n) \log^{g(r)}(n) \right|  X_1, \ldots, X_m  }.
 \ea \label{eq::Br_j} \ee
 Now, $D_u \overset{d} \leq \sum_{i=1}^n \text{Ber}\lr{(a+b)\frac{\phi^* \phi_i}{n}}$, where $\phi^*$ is governed by the size-biased law $\NUstar$ and $\{\phi_i\}_i$ are i.i.d. and bounded by $n^{\alpha}$. Hoeffding's inequality gives that $\frac{1}{n} \sum_{i=1}^n \phi_i \leq 2 \PHI$ w.p. at least $1 - \exp(-n^{1-2 \alpha})$, and $\phi^* \leq \log^{g(r)}(n)$ w.p. at least $1 - \bigO \lr{ \lr{\log^{g(r)}(n)}^{1 - \beta} }$ (note the exponent  $\beta - 1$ of the size-biased power-law). Conditioned on those events, we use a multiplicative Chernoff bound to obtain,
 \be \ba 
 & \P{\left. D_u \leq \log(n) \log^{g(r)}(n) \right|  X_1, \ldots, X_m  } 
 &\geq 1 - \bigO \lr{ \lr{\log^{g(r)}(n)}^{1 - \beta} }.
 \ea \label{eq::bin} \ee 
   Lemma \ref{lm::ActiveNeighbours} entails, since $m = o(n^{1/4})$,
 \be  \ba  \P{    B^{(u)}_{r+1} \left| D_u \leq \log^{g(r)+1}(n), \  X_1, \ldots, X_m \right. } 
 \geq 1 - \bigO \lr{ \frac{ \log^{g(r)+1}(n) } {n^{3/8}} } \ea. \label{eq::B}\ee
Then, \eqref{eq::Br_j} - \eqref{eq::B} together give
 \[ \P{    B^{(u)}_{r+1} \left| \bigcap_{v=1}^{u-1} B^{(v)}_{r+1} ,E_{r}, X_1, \ldots, X_m \right.} \geq 1 - \bigO \lr{ \lr{\log^{g(r)}(n)}^{1 - \beta} }. \]
Now, since conditioned on $E_r$, $n_r \leq \log^{g(r)}(n)$, \eqref{eq::Br} gives
 \[ \ba \P{    B_{r+1} | E_{r}} &\geq 1 - \bigO \lr{ \lr{\log^{g(r)}(n)}^{2 - \beta} }.  \ea \] 
The growth condition ($C_r$)follows also from \eqref{eq::bin}.

We take a similar approach to quantify  
 \be \ba
\P{  A_{r+1}| E_{r},n_r} \geq 
 1 - \sum_{u=1}^{n_r} \P{ \left.  \neg A^{(u)}_{r+1} \right| \bigcap_{v=1}^{u-1} A^{(v)}_{r+1} ,E_{r},n_r},
 \ea \label{eq::Ar} \ee
 where, 
 $ A^{(u)}_{r+1} = \{ D_u = \widehat{D}_u, D_u \leq \log^{g(r)+1}(n) \}. $
 Now, 
 \be \ba &  \P{ \left.   A^{(u)}_{r+1} \right| \bigcap_{v=1}^{u-1} A^{(v)}_{r+1} ,E_{r}} 
&\geq 1 - \bigO \lr{n^{(1-\beta/2) 1/8} + n^{-1/4} +   \log^{g(r)(1-\beta)}(n)   },
 \ea  \ee
due to Lemma \ref{lm::ActiveNeighbours}, since $n - |\mathcal{U} (m)| = o(n^{1/4})$ when $r$ is fixed. 
Thus, \eqref{eq::Ar} gives
\[ \P{  A_{r+1}| E_{r}} \geq 1- \bigO \lr{\log^{g(r)}(n) n^{(1-\beta/2) 1/8} + n^{-1/4} +   \log^{g(r)(2-\beta)}(n)   }. \]
\end{proof}

\begin{proof}[Proof of Lemma \ref{lm::D_r}]
Fix $u,v \in \partial G_r$. The probability of having an edge between $u$ and $v$ is smaller than $\bigO \lr{  n^{2 \alpha - 1} }$. For any $w \in V(G \setminus G_r)$, the probability that $(u,w)$ and $(v,w)$ both appear is smaller than $\bigO \lr{  n^{4 \alpha - 2} }$. Now, Lemma \ref{lm::InductionE_R} implies that
\[ |G_r| \leq \log(n)^{g(R)} R = \log^{2^R - 1}(n) R. \] 
 Hence, the result follows from a union bound over all triples $u,v,w$.
\end{proof}

\begin{proof}[Proof of Theorem \ref{thm::coupling}]
We can assume that all weights are bounded by $n^{\alpha}$. Indeed, by a  union bound over all vertices, this happens with probability $1 - \bigO \lr{ n^{1 - \alpha \beta}} = 1 - o_n(1).$ 
For a fixed integer $R > 0$, we have 
\[ \P{\cap_{s=1}^R K_s ,E_R} = 1 - o_n(1). \]
We construct a sequence $\{ N_k \}_{k=0}^{\infty}$ inductively as follows: Put $N_0=0$ and for each $k$, $N_k > N_{k-1}$ as the smallest number such that
\[ \P{ \cap_{s=1}^k K_s, E_k } \geq 1 - \frac{1}{k}, \text{ and } \log^{2^k - 1}(n) k \leq n^{1/9},\]
for all $n \geq N_k$. Put for $N_k \leq n < N_{k+1}$, $R(n) = k$. Then, for $n \geq N_k$,
\[ \P{ \cap_{s=1}^{R(n)} K_s, E_{R(n)}, |G_{R(n)}| \leq n^{1/9} } \geq 1 - \frac{1}{k}.\]
\end{proof}
\section{No long-range correlation in DC-SBM}
\label{sec::weak}
In this section we establish the main Theorem \ref{thm::ConvergenceToHalf}, from which Theorem \ref{thm::Main} then follows. To this end, we first condition on both the spins  of $\partial G_{R(n)}$ and all weights in $G$. Lemma \ref{lm::MarkovField} below shows that we then can remove the conditioning on $\sigma_v$ and the graph structure outside the $R$-neighbourhood (including the weights):
\be \P{\sigma_u =+| \sigma_{\partial G_{R}}, \sigma_v , G, \phi}  = \P{\sigma_u =+| \sigma_{\partial G_{R}} , G_R, \phi_{G_R}} + o_n(1). \label{eq::CondOnBound}\ee
We established in the previous section that a neighbourhood in $G$ looks like a $T^{\text{Poi}}$ tree with a Markov broadcasting process on it. Hence, the right-hand side of \eqref{eq::CondOnBound} converges to $	1/q$ in probability, establishing \eqref{eq::con}. We show in Lemma \ref{lm::NoConvergence} below that this contradicts the existence of a reconstruction that is positively correlated with the true type-assignment.

We begin by preparing an auxiliary lemma to prove \eqref{eq::con}, it establishes that long-range interactions are sufficiently weak. Its proof is inspired by Lemma 4.7 in \cite{MoNeSly15}. However (besides the additional complication of weights) the result stated here is stronger in the sense that the $o_n(1)$ terms converge uniformly to $0$ and that "conditioning on $G$" may now be replaced with "conditioning on $G_{A \cup B}$".
\begin{lemma}
The following holds conditioned that all the weights are smaller than $n^{\alpha}$, with $\alpha = 1/8$: Let  $G$ be an instance of the DC-SBM. Let  $s \in \spm$. Let $u$ be an uniformly picked vertex in $V(G)$.
Let $A = A(G)$, $B = B(G)$, $C = C(G) \subset V$ be a (random) partition of $V(G)$, with $u \in A$, such that $B$ separates $A$ and $C$ in $G$. Assume that $|A \cup B| \leq  n^{1/9}$ for asymptotically almost every realization of $G$. Then there exists a sequence of events $(\Omega_n)_n$ and a sequence of non-negative real numbers $(\epsilon_n)_n$, such that $\P{\Omega_n} = 1 - o_n(1)$, and $\epsilon(n)=o_n(1)$, and further, for each $n$,
\be |  \P{\sigma_u =s | \sigma_{B \cup C} , G,\phi} - \P{\sigma_u =s | \sigma_B ,  G_{A \cup B}, \phi_{A \cup B}}| \leq \epsilon(n), \label{eq::weak_long_range} \ee 
on $\Omega_n$.
\label{lm::MarkovField}
\end{lemma}
\begin{proof}For a fixed graph $g$, spin-configuration $\tau$ and degree-configuration $\psi$, we make a factorization of $\P{G=g,\sigma = \tau | \phi = \psi}$ into parts depending on $A,B$ and $C$. 
We claim that the part that measures the interaction between $A$ and $C$ is asymptotically independent of $\tau$. 
Put
\[ \Psi_{uv}(g,\tau, \psi) = \left\{ 
  \begin{array}{l l}
    a \frac{\psi_u \psi_v}{n} & \quad \text{ if } (u,v) \in E(g) \text{ and } \tau_u = \tau_v \\
    b \frac{\psi_u \psi_v}{n} & \quad \text{ if } (u,v) \in E(g) \text{ and } \tau_u \neq \tau_v  \\
    1- a \frac{\psi_u \psi_v}{n} & \quad \text{ if } (u,v) \notin E(g) \text{ and } \tau_u = \tau_v \\
    1 - b \frac{\psi_u \psi_v}{n} & \quad \text{ if } (u,v) \notin E(g) \text{ and } \tau_u \neq \tau_v.  \\
  \end{array} \right. \]
We define for arbitrary sets $U_1, U_2 \subset V$, 
\[ \ba Q_{U_1, U_2} &= Q_{U_1, U_2}(g,\tau,\psi) = Q_{U_1, U_2}(g_{U_1 \cup U_2},\tau_{U_1 \cup U_2},\psi_{U_1 \cup U_2}) \\
&= \prod_{u \in U_1, v \in U_2} \Psi_{uv}(g,\tau, \psi), \ea \]
where the subscript indicates restriction of the corresponding quantities to $U_1 \cup U_2$.
Then, we have,
\be  \P{G=g |\sigma=\tau, \phi = \psi} = Q_{A \cup B, A \cup B} Q_{B \cup C, C} Q_{A,C}. \label{eq::prob} \ee

We begin by demonstrating that $Q_{A,C}$ is asymptotically independent of $\tau$: Write, 
\[ Q_{A,C}(g,\tau,\psi) = \prod_{u \in A, v \in C: \tau_u = \tau_v} \lr{ 1- a \frac{\psi_u \psi_v}{n}} \prod_{u \in A, v \in C: \tau_u \neq \tau_v} \lr{ 1- b \frac{\psi_u \psi_v}{n}}, \] since $A$ and $C$ are separated by $B$ (there are thus no edges between $A$ and $C$).
The first product may be rewritten as, 
\[ \ba \prod_{u \in A, v \in C: \tau_u = \tau_v} \lr{ 1- a \frac{\psi_u \psi_v}{n}} &= \text{exp}\lr{\s{u \in A, v \in C: \tau_u = \tau_v}{} \text{log}\lr{1- a \frac{\psi_u \psi_v}{n}}} \\
&= \text{exp}\lr{\s{u \in A, v \in C: \tau_u = \tau_v}{} \lr{ - a \frac{\psi_u \psi_v}{n} + \bigO \lr{ n^{4\alpha - 2} }}} \\
&= \text{exp}\lr{ - \frac{a }{n}\s{u \in A, v \in C: \tau_u = \tau_v}{} \psi_u \psi_v +  \bigO \lr{ n_A  n^{4\alpha - 1}} } .
\ea \]
Now, the sum $\frac{1}{n}\s{u \in A, v \in C: \tau_u = \tau_v}{} \psi_u \psi_v$ tends to $\frac{\|A\| \PHI}{q}$, if $(\tau,\psi) \in \Omega(n)$, where
\[ \|A\| = \s{u \in A}{} \psi_u, \]
and where,
\be \Omega(n) = \left \{ (\tau',\psi'):  \left| \frac{1}{n}\sum_{\tau_u = k, u \in V} \psi_u - \frac{\PHI}{q} \right| \leq n^{- \frac{1}{4}}, \forall k \in \spm \right \}. \label{eq::omega} \ee
Indeed, 
\BA \frac{1}{n} \s{u \in A, v \in C: \tau_u = \tau_v}{} \psi_u \psi_v &= \sum_{k=1}^q \s{u \in A}{} \indicator{\tau_u = k} \psi_u \ \frac{1}{n} \s{v \in C}{} \indicator{\tau_v = k} \psi_v  \\
&= \frac{\|A\| \PHI}{q} + \bigO\lr{ n^{- \frac{1}{72}}}, \EA
since $|V|-|C| \leq n^{1/9}$ and $\psi_u \leq n^{1/8}.$

As a consequence,
\[ \ba \prod_{u \in A, v \in C: \tau_u = \tau_v} \lr{ 1- a \frac{\psi_u \psi_v}{n}} &=  \exp{ \lr{ \bigO \lr{n^{-\frac{1}{72}}} }} \cdot \exp \lr{ -  a \frac{ \|A\| \PHI}{q}  } \\ 
&= (1 + o_n(1))\exp{ \lr{  - a \frac{\|A\| \PHI}{q} } },  \ea \] 
where the $o_n$ term is uniform for all $(\tau,\psi) \in \Omega(n)$. We carry out a similar calculation for the other product. Together we obtain
\be Q_{A,C}(g,\tau,\psi) = (1 + o_n(1))\exp{ \lr{ -   \frac{a+(q-1)b}{q}  \|A\| \PHI  } }, \label{eq::Q_AC}\ee
uniformly for all $(\tau,\psi) \in \Omega(n)$. This proves that $Q_{A,C}(g,\tau,\psi)$ is indeed essentially independent of $\tau$ for most pairs $(\tau, \psi)$.

We use the above to prove that, for $u \in V$, 
\be \ba &\P{\sigma_u=\tau_u | \sigma_{B \cup C} = \tau_{B \cup C}, G=g, \phi = \psi, ( \phi, \sigma) \in \Omega(n)} \\ &= (1 + o_n(1)) \P{\sigma_u=\tau_u | \sigma_B = \tau_B, G_{A \cup B}=g_{A \cup B}, \phi_{A \cup B} = \psi_{A \cup B}, ( \phi, \sigma) \in \Omega(n)} \\ &\quad +o_n(1). \ea \label{eq::CondOmega} \ee 
Fix  $(\tau,\psi) \in \Omega(n)$. Then, 
\be \P{G=g,\sigma=\tau | \phi = \psi, ( \phi, \sigma) \in \Omega(n) } =\P{G=g | \sigma=\tau, \phi = \psi} f(\psi,n), \label{eq::probInd}
 \ee
where $f(\psi,n) = \P{\sigma = \tau |\phi = \psi, ( \phi, \sigma) \in \Omega(n)} = \frac{q^{-n}}{\P{( \phi, \sigma) \in \Omega(n) | \phi = \psi}}$. 
 Hence, plugging \eqref{eq::prob} and \eqref{eq::Q_AC} in \eqref{eq::probInd},
\be \ba &\P{G=g,\sigma=\tau | \phi = \psi, ( \phi, \sigma) \in \Omega(n)} \\ \quad &=  Q_{A \cup B, A \cup B} (g,\tau,\psi) Q_{B \cup C, C} (g,\tau,\psi) \\
&\quad\quad \cdot (1 + o_n(1))\exp{ \lr{ -   \frac{a+(q-1)b}{q} \|A\| \PHI } } f(\psi,n). \label{eq::plug} \ea \ee
 Put, for $U \subset V$,
\[ \Omega_U(n) = \Omega_U( \psi, \tau_U, n) = \{ \tau': \tau'_U = \tau_U, (\tau',\psi) \in \Omega(n)  \}, \] 
then, invoking \eqref{eq::plug},
\be \ba
& \P{G=g,\sigma_U=\tau_U | \phi = \psi, ( \phi, \sigma) \in \Omega(n)} \\ &= \sum_{\tau' \in \Omega_U(n)} \P{G=g,\sigma=\tau' | \phi = \psi, ( \phi, \sigma) \in \Omega(n)} \\
&= \sum_{\tau' \in \Omega_U(n)}
Q_{A \cup B, A \cup B} (g,\tau',\psi) Q_{B \cup C, C} (g,\tau',\psi) \\
&\quad\quad \cdot (1 + o_n(1))  \exp{ \lr{ -   \frac{a+(q-1)b}{q} \|A\| \PHI } } f(\psi,n) \\
&= (1 + o_n(1))  \exp{ \lr{ -   \frac{a+(q-1)b}{q} \|A\| \PHI } } f(\psi,n) \\
&\quad\quad \cdot \sum_{\tau' \in \Omega_U(n)}
Q_{A \cup B, A \cup B} (g,\tau',\psi) Q_{B \cup C, C} (g,\tau',\psi), \\
 \ea \label{eq::rel} \ee
where we could interchange the order $o_n(1)$ term and the sum because the former holds \emph{uniformly} for all $(\phi,\sigma) \in \Omega(n)$. 

 We apply \eqref{eq::rel} with $U=A$ and $U = A \cup B$, to rewrite the right hand side of
\be \ba 
&\P{\sigma_A=\tau_A | \sigma_B = \tau_B, G=g, \phi = \psi, ( \phi, \sigma) \in \Omega(n)} \\
&= \frac{\P{G=g, \sigma_{A \cup B}=\tau_{A \cup B} |  \phi = \psi, ( \phi, \sigma) \in \Omega(n)}}{\P{ G=g,  \sigma_{B}=\tau_{B}|  \phi = \psi, ( \phi, \sigma) \in \Omega(n)}} \label{eq::prob_condB}
\ea \ee
as
\[ \ba & \quad (1 + o_n(1)) \frac{\sum_{\tau' \in \Omega_{A \cup B}(n)}
Q_{A \cup B, A \cup B} (g,\tau',\psi) Q_{B \cup C, C} (g,\tau',\psi)}{\sum_{\tau' \in \Omega_{B}(n)}
Q_{A \cup B, A \cup B} (g,\tau',\psi) Q_{B \cup C, C} (g,\tau',\psi)} \\
&= (1 + o_n(1)) \frac{Q_{A \cup B, A \cup B} (g,\tau,\psi) \sum_{\tau' \in \Omega_{A \cup B}(n)}
 Q_{B \cup C, C} (g,\tau',\psi)}{\sum_{\tau''' \in \Omega_{B \cup C}(n)}
Q_{A \cup B, A \cup B} (g,\tau''',\psi) \sum_{\tau'' \in \Omega_{A \cup B}(n)} Q_{B \cup C, C} (g,\tau'',\psi)}, \ea \]
where we used that $Q_{U_1,U_2}(\tau')$ depends  on $\tau'$ only through $\tau'_{U_1 \cup U_2}$ to rewrite the numerator. Factorization of the denominator is justified as follows:
For an arbitrary $\tau' \in \Omega_B(n)$, put $\tau'' = (\tau_{A \cup B},\tau'_C) \in  \Omega_{A \cup B}(n)$ and $\tau''' = (\tau'_A, \tau_{B \cup C}) \in  \Omega_{B \cup C}(n)$. Then,
\be Q_{A \cup B, A \cup B} (g,\tau',\psi) Q_{B \cup C, C} (g,\tau',\psi) = Q_{A \cup B, A \cup B} (g,\tau''',\psi) Q_{B \cup C, C} (g,\tau'',\psi). \label{eq::factor_denom}\ee
This proves that the double summation is at least as large as the single sum. Equality follows upon putting $\tau' = (\tau'''_A,\tau_B,\tau''_C)$ for arbitrary $\tau'' \in \Omega_{A \cup B}(n)$ and $\tau''' \in \Omega_{B \cup C}(n)$: \eqref{eq::factor_denom} is then again satisfied. Hence, \eqref{eq::prob_condB} is equivalent to
\be \ba  &\P{\sigma_A=\tau_A | \sigma_B = \tau_B, G=g, \phi = \psi, ( \phi, \sigma) \in \Omega(n)} \\
&= (1 + o_n(1)) \frac{Q_{A \cup B, A \cup B} (g,\tau,\psi)}{\sum_{\tau''' \in \Omega_{B \cup C}(n)}
Q_{A \cup B, A \cup B} (g,\tau''',\psi) }. \label{eq::prob_condB_2}  \ea \ee

We shall rewrite the right hand side of \eqref{eq::prob_condB_2} to obtain on the one hand:
\be \ba  &\P{\sigma_u=\tau_u | \sigma_B = \tau_B, G=g, \phi = \psi, ( \phi, \sigma) \in \Omega(n)} \\
&= (1 + o_n(1))  \widehat{F}\lr{g_{A \cup B},\tau_{u \cup B},\psi_{A \cup B}}, \label{eq::Fhat} \ea \ee
for some function $\widehat{F}(\cdot) \leq 1$.
And, on the other hand:
\be \ba & \P{\sigma_u=\tau_u | \sigma_B = \tau_B, G=g, \phi = \psi, ( \phi, \sigma) \in \Omega(n)} \\ &= (1 + o_n(1)) \P{\sigma_u=\tau_u | \sigma_{B \cup C} = \tau_{B \cup C}, G=g, \phi = \psi, ( \phi, \sigma) \in \Omega(n)}. \ea \label{eq::sigmaBC} \ee
To do so, note that
\[ \sum_{\tau''' \in \Omega_{B \cup C}(n)} Q_{A \cup B, A \cup B} (g,\tau''',\psi) =
\sum_{\tau_A''' \in \spm^A } Q_{A \cup B, A \cup B} (g_{A \cup B},(\tau_A''',\tau_B),\psi_{A \cup B}),
 \]
Therefore, \eqref{eq::prob_condB_2} is equivalent to
\[\ba  \P{\sigma_A=\tau_A | \sigma_B = \tau_B, G=g, \phi = \psi, ( \phi, \sigma) \in \Omega(n)} = (1 + o_n(1))  F\lr{g_{A \cup B},\tau_{A \cup B},\psi_{A \cup B}}, \ea \]
for some function $F(\cdot) \leq 1$.
If we fix $u \in A$ and integrate over all possible values of $\tau_{A \setminus u}$ while keeping $\tau_{B \cup C}$ and $\psi$ constant, we obtain \eqref{eq::Fhat}.

 To establish \eqref{eq::sigmaBC}, we multiply both denominator and enumerator of \eqref{eq::prob_condB_2} by $Q_{B \cup C, C} (g,\tau,\psi)$:
\[ \ba  & \P{\sigma_A=\tau_A | \sigma_B = \tau_B, G=g, \phi = \psi, ( \phi, \sigma) \in \Omega(n)} \\
&= (1 + o_n(1)) \frac{Q_{A \cup B, A \cup B} (g,\tau,\psi) Q_{B \cup C, C} (g,\tau,\psi) }{\sum_{\tau' \in \Omega_{B \cup C}(n)}
Q_{A \cup B, A \cup B} (g,\tau',\psi) Q_{B \cup C, C} (g,\tau',\psi) } \\
&=  (1 + o_n(1)) \frac{\P{G=g,\sigma =\tau | \phi = \psi, ( \phi, \sigma) \in \Omega(n)}}{\P{G=g,\sigma_{B \cup C}=\tau_{B \cup C} | \phi = \psi, ( \phi, \sigma) \in \Omega(n)}} \\
&= (1 + o_n(1)) \P{\sigma_A=\tau_A | \sigma_{B \cup C} = \tau_{B \cup C}, G=g, \phi = \psi, ( \phi, \sigma) \in \Omega(n)}.   \ea \]
Integrating again over $\tau_{A \setminus u}$ gives \eqref{eq::sigmaBC}.

 We use \eqref{eq::Fhat} to obtain
\be \ba &\P{\sigma_u=\tau_u | \sigma_B = \tau_B, G_{A \cup B}=g_{A \cup B}, \phi_{A \cup B} = \psi_{A \cup B}, ( \phi, \sigma) \in \Omega(n)} \\
&= \s{\widehat{g},\psi_C}{} \P{  \sigma_u=\tau_u |  \sigma_B = \tau_B, G=\widehat{g}, \phi = (\psi_{A \cup B},\psi_C), ( \phi, \sigma) \in \Omega(n)} \\ & \quad \quad  \quad \cdot \P{G = \widehat{g}, \phi_C = \psi_C | \sigma_B = \tau_B, G_{A \cup B}=g_{A \cup B}, \phi_{A \cup B} = \psi_{A \cup B}, ( \phi, \sigma) \in \Omega(n) } \\
&= (1 + o_n(1))  \widehat{F}\lr{g_{A \cup B},\tau_{u \cup B},\psi_{A \cup B}} +o_n(1) \\
&= (1 + o_n(1)) \P{\sigma_u=\tau_u | \sigma_B = \tau_B, G=g, \phi = \psi, ( \phi, \sigma) \in \Omega(n)} +o_n(1). \ea \label{eq::gABC}\ee
Combining \eqref{eq::sigmaBC} and \eqref{eq::gABC} gives \[ \ba &\P{\sigma_u=\tau_u | \sigma_{B \cup C} = \tau_{B \cup C}, G=g, \phi = \psi, ( \phi, \sigma) \in \Omega(n)} \\&= (1 + o_n(1)) \P{\sigma_u=\tau_u | \sigma_B = \tau_B, G=g, \phi = \psi, ( \phi, \sigma) \in \Omega(n)} \\ &= (1 + o_n(1)) \P{\sigma_u=\tau_u | \sigma_B = \tau_B, G_{A \cup B}=g_{A \cup B}, \phi_{A \cup B} = \psi_{A \cup B}, ( \phi, \sigma) \in \Omega(n)}, \ea \]
i.e., the claim \eqref{eq::CondOmega}. 

Our last step consists in removing the condition  $(\sigma,\phi) \in \Omega(n)$:
Put $\epsilon(n) = 1 - \P{(\sigma,\phi) \in \Omega(n)}$, then $\lim_{n \to \infty} \epsilon(n) = 0$. Indeed, $\sum_{u \in C} \indicator{\sigma_u = k} \phi_u = \sum_{u \in V} \indicator{\sigma_u = k} \phi_u + \bigO \lr{n^{17/72}}$, where the sum over $V$ has $n \frac{\PHI}{q}$ as a mean. The claim thus follows upon applying Hoeffding's inequality (the weights are assumed to be bounded by $n^{\alpha}$).

  Consider the random variable\\ $\P{( \phi, \sigma) \in \Omega(n)|  \sigma_B,  G_{A \cup B}, \phi_{A \cup B} } = \E{ 1_{(\phi, \sigma) \in \Omega(n)} | \sigma_B,  G_{A \cup B}, \phi_{A \cup B}} $. It has expectation $1 - \epsilon(n)$, so that 
\be\P{\E{ 1_{(\phi, \sigma) \in \Omega(n)} | \sigma_B,  G_{A \cup B}, \phi_{A \cup B}} \geq 1 - \sqrt{\epsilon(n)}} \geq 1 - 2 \sqrt{\epsilon(n)}. \label{eq::prob_exp}\ee
 Indeed, if contrary to our claim $f:= \E{ 1_{(\phi, \sigma) \in \Omega(n)} | \sigma_B,  G_{A \cup B}, \phi_{A \cup B}} \geq 1 - \sqrt{\epsilon(n)}$ with probability at most $1 - 2 \sqrt{\epsilon(n)}$, then
\[ \ba \E{f} &\leq 1 \cdot (1 - 2 \sqrt{\epsilon(n)}) + (1 -  \sqrt{\epsilon(n)})\cdot  2 \sqrt{\epsilon(n)} < 1 - \epsilon(n).
 \ea \]
Similarly, for $B \cup C$, 
\be\P{\E{ 1_{(\phi, \sigma) \in \Omega(n)} | \sigma_{B \cup C},  G, \phi} \geq 1 - \sqrt{\epsilon(n)}} \geq 1 - 2 \sqrt{\epsilon(n)}. \label{eq::prob_exp_BC} \ee
It follows that, with probability at least $1 - \bigO \lr{ \sqrt{\epsilon(n)} }$,
\[ \ba &\P{\sigma_u = +| \sigma_B ,  G_{A \cup B}, \phi_{A \cup B}} \\
&= \lr{ 1 -\bigO \lr{ \sqrt{\epsilon(n)} }}\P{\sigma_u = + | \sigma_B,  G_{A \cup B}, \phi_{A \cup B}, (\phi, \sigma) \in \Omega(n)} \\ & \quad+ \bigO \lr{ \sqrt{\epsilon(n)}} \P{\sigma_u = +| \sigma_B ,  G_{A \cup B}, \phi_{A \cup B}, (\phi, \sigma) \notin \Omega(n)} \\
&= (1 + o_n(1)) \P{\sigma_u = + | \sigma_{B \cup C} , G, \phi, (\phi, \sigma) \in \Omega(n)} + o_n(1)\\
&= (1 + o_n(1)) \P{\sigma_u = + | \sigma_{B \cup C} , G, \phi} + o_n(1),
\ea
\]
where we used \eqref{eq::prob_exp}, \eqref{eq::CondOmega} and \eqref{eq::prob_exp_BC} in the first, second, respectively last equality. 
\end{proof}
We are now in a position to proof Theorem \ref{thm::ConvergenceToHalf}:
\begin{proof}[Proof of Theorem \ref{thm::ConvergenceToHalf}]
Put $A =  G_{R - 1}$, $B = \partial G_R$ and $C = G \setminus G_R$. We use the monotonicity property of conditional variance \footnote{For random variables $X,Y,Z$, we have Var$(\E{X|Y}) \leq $ Var$(\E{X|Y,Z})$. Indeed, put $z = \E{X|Y,Z}$, then by Jensen's inequality $\E{z|Y}^2 \leq \E{z^2|Y}.$ So that, after taking expectations on both sides, $\E{\E{X|Y}^2} \leq \E{\E{X|Y,Z}^2}$. Writing out the definition of the variance then establishes the claim. } to obtain that, for any $s \in \spm$,
\[ 0 \leq \text{Var}( \E{\indicator{\sigma_u=s} | \sigma_v , G}) \leq \text{Var}(\E{\indicator{\sigma_u=s} | \sigma_{B \cup C} , G,\phi }) + o_n(1) \]
since $v \in B \cup C$ w.h.p. It suffices to show that the right-hand side tends to $0$, because this implies that  $\P{\sigma_u = s| \sigma_v , G} \overset{\mathbb{P}} \to 1/q $. 

To show that the right-hand side tends indeed to $0$, it suffices that \\$\P{\sigma_u = s | \sigma_{B \cup C} , G,\phi } \overset{\mathbb{P}} \to 1/q.$

Now, by using the partition $A \cup B \cup C$ of $V(G)$ in Lemma \ref{lm::MarkovField}, we have, since $G_R \leq n^{1/9}$ w.h.p., and all weights are bounded by $n^{\alpha}$ w.h.p. (this follows from a union bounded over all vertices), 
\[  \P{\sigma_u = s | \sigma_{B \cup C} , G,\phi} \overset{w.h.p.} = \P{\sigma_u = s |  \sigma_{\partial G_R} , G_R, \phi_{G_R}} + o_n(1) .   \]
Theorem \ref{thm::coupling} entails that the local neighbourhood is w.h.p. equal to $T^{\text{Poi}}$. Let $T^n$ be an independent copy of $T^{\text{Poi}}$ with root $\rho$, spins $\tau^n$ and weights $\psi^n$. Note that we stress the dependence on $n$, because the Poisson-tree is sampled again for each $n$.
\be \ba \P{\sigma_u = s |  \sigma_{\partial G_R} , G_R, \phi_{G_R}} + o_n(1) & \overset{w.h.p.} = \P{\tau^n_{\rho} = s |  \tau^n_{\partial T^n_R} , T^n_R, \psi_{T^n_R}} + o_n(1) \\
&= \P{\tau^n_{\rho} = s |  \tau^n_{\partial T^n_R} , T^n_R} + o_n(1), \label{eq::ComTree} \ea \ee
due to the coupling from Theorem \ref{thm::coupling}.
 By Theorem \ref{thm::main_broadcasting}, the right-hand side of \eqref{eq::ComTree} tends to $1/q$ in probability. 
\end{proof}

Using the following auxiliary lemma, Theorem \ref{thm::Main} follows  from Theorem \ref{thm::ConvergenceToHalf}:
\begin{lemma}
Assume that  $(a-b)^2 \PHItwo \leq q(a+b)$. Let $G$ be an observation of the DC-SBM, with true communities $\{\sigma_i\}_{i=1}^n$. Let $u$ and $v$ be two uniformly picked vertices. Let $\{\widehat{\sigma}_i\}_{i=1}^n$ be a reconstruction of the communities, based on the observation $G$. Assume that there exists $\delta > 0$ such that
\[ f(n):= \frac{1}{n} \s{i=1}{n} \indicator{ \sigma_i = \widehat{\sigma}_i} \geq \frac{1}{q} + \delta, \]
with high probability. Then, there exists $s \in \spm$, such that 
 $ \P{\sigma_u = s | \sigma_v, G}$ does \emph{not} converge in probability to $1/q$.
\label{lm::NoConvergence}
\end{lemma}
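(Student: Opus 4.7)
The plan is an argument by contradiction, turning the supposed positively-correlated reconstruction $\widehat{\sigma}$ into a forbidden two-point correlation in the posterior and then contradicting it with the global $\sigma \mapsto -\sigma$ symmetry of the model.

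First I would record the symmetry. Because edge probabilities depend on spins only through agreement and the prior on $\sigma$ is uniform on $\{\pm\}^n$, the joint law of $(\sigma, \phi, G)$ is invariant under the global sign flip, and hence so is the posterior of $\sigma$ given $G$. This yields the exact identities $\P{\sigma_v = + \mid G} = 1/2$ and $\P{\sigma_u = +, \sigma_v = + \mid G} = \P{\sigma_u = -, \sigma_v = - \mid G}$, so that for independent uniformly chosen $u, v \in [n]$
\begin{equation*}
\P{\sigma_u = + \mid \sigma_v = +, G, u, v} \;=\; \tfrac{1}{2}\bigl(1 + \E{\sigma_u \sigma_v \mid G, u, v}\bigr).
\end{equation*}
If the lemma's conclusion were to fail, i.e.\ $\P{\sigma_u = + \mid \sigma_v, G} \toinp 1/2$, then the bounded random variable $\E{\sigma_u \sigma_v \mid G, u, v}$ would tend to $0$ in probability, and hence in $L^1$.

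Next I would extract a strictly positive lower bound from the reconstruction. Setting $X_i := \sigma_i \widehat{\sigma}_i \in \{\pm 1\}$, the hypothesis $f(n) \geq 1/2 + \delta$ w.h.p.\ is equivalent to $\bar X := \tfrac{1}{n}\sum_i X_i \geq 2\delta$ w.h.p. Since $u, v$ are i.i.d.\ uniform on $[n]$ and independent of $(\sigma, \widehat{\sigma})$, the algebraic identity $\bigl(\tfrac{1}{n}\sum_i X_i\bigr)^2 = \tfrac{1}{n^2}\sum_{u, v} X_u X_v$ gives $\E{X_u X_v \mid \sigma, \widehat{\sigma}} = \bar X^2$, so
\begin{equation*}
\E{\sigma_u \widehat{\sigma}_u \sigma_v \widehat{\sigma}_v} \;=\; \E{\bar X^2} \;\geq\; 4\delta^2 - o_n(1).
\end{equation*}
On the other hand, since $\widehat{\sigma}$ is a deterministic function of $G$, tower-conditioning yields
\begin{equation*}
\E{\sigma_u \sigma_v \widehat{\sigma}_u \widehat{\sigma}_v} \;=\; \E{\widehat{\sigma}_u(G)\,\widehat{\sigma}_v(G)\,\E{\sigma_u \sigma_v \mid G, u, v}}.
\end{equation*}
Under the contradiction assumption the inner conditional expectation tends to $0$ in probability while the integrand is bounded by $1$; bounded convergence then forces the whole expression to $0$, contradicting the lower bound $4\delta^2 - o_n(1)$.

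The only really delicate ingredient is the symmetry step of the first paragraph: it is crucial that $\P{\sigma_v = + \mid G} = 1/2$ holds \emph{exactly} rather than merely asymptotically, since otherwise the key identity $\P{\sigma_u = + \mid \sigma_v = +, G, u, v} = \tfrac{1}{2}(1 + \E{\sigma_u \sigma_v \mid G, u, v})$ would only hold up to vanishing errors that might absorb the strictly positive lower bound. Everything else is a routine manipulation: the elementary identity $\E{X_u X_v \mid \sigma, \widehat{\sigma}} = \bar X^2$ plus a direct application of bounded convergence to the factorisation of $\E{\sigma_u \sigma_v \widehat{\sigma}_u \widehat{\sigma}_v}$.
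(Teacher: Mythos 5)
Your proof is correct, but it takes a genuinely different route from the paper's. The paper argues via conditional variances: from the contradiction hypothesis it deduces $\mathrm{Var}(\indicator{\sigma_u=\sigma_v}\mid G)\toinp 1/4$, then invokes monotonicity of conditional variance under coarsening of the conditioning and a combinatorial count (conditional on $\widehat{\sigma}_u=\widehat{\sigma}_v=+$, overlap $f(n)=\tfrac12+\epsilon$, and the concentration event $E$ for $n_+$) to show $\P{\sigma_u=\sigma_v\mid \widehat{\sigma}_u=\widehat{\sigma}_v=+}\geq \tfrac12+\tfrac{\delta^2}{8}$, whence the coarser conditional variance is at most $\tfrac14-\tfrac{\delta^4}{64}$, a contradiction. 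You instead run a second-moment/overlap argument: the identity $\E{\sigma_u\sigma_v\widehat{\sigma}_u\widehat{\sigma}_v}=\E{\bar X^2}\geq 4\delta^2-o_n(1)$ on one side, versus the tower-property factorization through $\E{\sigma_u\sigma_v\mid G,u,v}\toinp 0$ on the other. Both hinge on the same exact global flip symmetry of the posterior (your identity $\P{\sigma_u=+\mid\sigma_v=+,G}=\tfrac12(1+\E{\sigma_u\sigma_v\mid G})$ is the same fact the paper uses when it writes $\P{\sigma_u=\sigma_v\mid G}=\P{\sigma_u=+\mid\sigma_v=+,G}$). Your version is shorter and cleaner: it avoids the $\mathcal{O}(n^{3/4})$ bookkeeping around the event $E$, does not need the exact-bisection assumption $|\{i:\widehat{\sigma}_i=+\}|=n/2$ at all, and sidesteps the paper's slightly informal pointwise statement of variance monotonicity (which is only valid after taking expectations). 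What the paper's computation buys in exchange is an explicit quantitative picture of how the correlation $\P{\sigma_u=\sigma_v\mid\widehat{\sigma}_u=\widehat{\sigma}_v=+}\approx\tfrac12+2\epsilon^2$ arises from the reconstruction. One cosmetic point in your write-up: the contradiction hypothesis should be stated for $\P{\sigma_u=+\mid\sigma_v=+,G}$ as in the lemma (your $\P{\sigma_u=+\mid\sigma_v,G}$ coincides with it only after invoking the flip symmetry you establish in the same paragraph), and the diagonal event $u=v$ contributes only an $O(1/n)$ term to $\E{\bar X^2}$ and to the convergence in probability, so it is harmless but worth a word.
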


\begin{proof}
Assume for a contradiction that for every $s$, $ \P{\sigma_u = s | \sigma_v, G}$ tends to $1/q$ in probability. Since $\widehat{\sigma}_u$ depends on $\sigma_u$ only through $G$, we have for any $s \in \spm$, 
\BA \text{Var}\lr{\E{\indicator{\sigma_u = s}|\sigma_v,G}} 
&= \text{Var}\lr{\E{\indicator{\sigma_u = s}|\widehat{\sigma}_u,\sigma_v,G}} \\
& \geq   \text{Var} \lr{\E{\indicator{\sigma_u = s}|\widehat{\sigma}_u}},\label{eq::viol} \EA
 where the term on the left tends to zero by assumption. 
By definition of $f(n)$,
\[\begin{aligned}
1/q + \delta +o(1) &\leq \sum_s \P{\sigma_u = \widehat{\sigma}_u|\widehat{\sigma}_u=s} \P{\widehat{\sigma}_u=s}.
\end{aligned}  \] 
Hence, for large enough $n$, there must be an $s$ such that $\P{\sigma_u = \widehat{\sigma}_u|\widehat{\sigma}_u=s} \geq 1/q + \delta/2$ and $\P{\widehat{\sigma}_u=s} \geq \frac{\delta}{3q}$. As a consequence, the term on the right of \eqref{eq::viol} does not tend to zero.  
\end{proof}

 We summarize these results in Theorem \ref{thm::Main}:
\begin{proof}[Proof of Theorem \ref{thm::Main}]
Combine Theorem \ref{thm::ConvergenceToHalf} and Lemma \ref{lm::NoConvergence}.
\end{proof}

\section{Acknowledgement}
The authors would like to thank Joe Neeman for an inspiring discussion.

\bibliographystyle{plain}
\bibliography{literature}
\end{document}